\numberwithin{equation}{section}
\font\script=rsfs10 at 11pt
\def\eps{\varepsilon}
\def\H{{\mbox{\script H}\,\,}}
\def\Hz{$(\hbox{\bf H})$}
\def\L{{\mbox{\script L}\,\,}}
\def\E{\mathcal E}
\def\P{\mathcal P}
\def\R{\mathbb R}
\def\S{\mathbb S}
\def\N{\mathbb N}
\def\bal{\begin{aligned}}
\def\eal{\end{aligned}}
\def\proofof#1{\begin{proof}[Proof of #1]}
\def\Chi#1{\hbox{{\large $\chi$}{\Large $_{_{#1}}$}}}
\def\step#1#2{\par\noindent{\underline{\it Step~#1.}}\emph{ #2}\\}
\def\freccia#1{\xrightarrow[\ #1]{}}
\def\XXint#1#2#3{{\setbox0=\hbox{$#1{#2#3}{\int}$} \vcenter{\vspace{-1pt}\hbox{$#2#3$}}\kern-.5\wd0}}
\def\Xint#1{\mathchoice {\XXint\displaystyle\textstyle{#1}}{\XXint\textstyle\scriptstyle{#1}}{\XXint\scriptstyle\scriptscriptstyle{#1}}{\XXint\scriptscriptstyle\scriptscriptstyle{#1}}\!\int}
\def\XXiint#1#2#3{{\setbox0=\hbox{$#1{#2#3}{\iint}$} \vcenter{\vspace{-1pt}\hbox{$#2#3$}}\kern-0.5\wd0}}
\def\Xiint#1{\mathchoice {\XXiint\displaystyle\textstyle{#1}}{\XXiint\textstyle\scriptstyle{#1}}{\XXiint\scriptstyle\scriptscriptstyle{#1}}{\XXiint\scriptscriptstyle\scriptscriptstyle{#1}}\!\iint}
\def\intmed{\Xint{\hbox{---}}}
\def\iintmed{\Xiint{\hbox{-----}}}
\def\comp{\subset\subset}
\def\spt{{\rm spt}}
\newcommand{\res}{\mathop{\hbox{\vrule height 7pt width .5pt depth 0pt
\vrule height .5pt width 6pt depth 0pt}}\nolimits}
\newcounter{mt}
\def\maintheorem#1#2#3{\par \medskip \noindent {\bf Theorem~\mref{#1}}~(#2).~{\it #3}\par}
\def\mref#1{\Alph{#1}}
\def\maintheoremdeclaration#1{\stepcounter{mt}\newcounter{#1}\setcounter{#1}{\arabic{mt}}}
\newtheorem{theorem}{Theorem}[section]
\newtheorem{lemma}[theorem]{Lemma}
\newtheorem{prop}[theorem]{Proposition}
\newtheorem{defin}[theorem]{Definition}
\newtheorem{remark}[theorem]{Remark}
\begin{document}

\title[Minimizing measures for non-local energies]{On the existence and boundedness of minimizing measures for a general form of non-local energies}

\author{D. Carazzato}
\author{A. Pratelli}

\begin{abstract}
In this paper we consider a very general form of a non-local energy in integral form, which covers most of the usual ones (for instance, the sum of a positive and a negative power). Instead of admitting only sets, or $L^\infty$ functions, as admissible objects, we define the energy for all the Radon measures. We prove the existence of optimal measures in a wide generality, and we show that in several cases the optimal measures are actually $L^\infty$ functions, providing an \emph{a priori} bound on their norm. We also derive a uniqueness result for minimizers.
\end{abstract}

\maketitle

\section{Introduction}

In recent years, much work has been dedicated to study energies with a competition between an attractive and a repulsive term. One of the main sources of interest has been the celebrated liquid drop model by Gamow, which amounts in minimizing the energy
\[
P(E) + \iint_{E\times E} |y-x|^{-\alpha}\,dy\,dx
\]
among sets $E$ with a given volume in $\R^N$, where $P(E)$ is the perimeter of the set $E$ and $\alpha\in (0,N)$. The original model was actually with $N=3$ and $\alpha=1$, the extension to a general dimension and power is then natural. There have been several generalizations of this problem. One of them has been to substitute the perimeter with a fractional perimeter (see~\cite{F2M3}); another one, as done by several authors, has been to ``relax'' the problem, considering not only sets of a given volume, but more in general positive functions with values in $[0,1]$ and with a fixed $L^1$ norm. Both generalizations together have led Frank and Lieb in~\cite{FL} to study the minimization of the problem
\begin{equation}\label{FrLi}
\iint_{\R^N\times\R^N} \left(|y-x|^\beta + |y-x|^{-\alpha}\right) f(x) f(y)\,dy\,dx
\end{equation}
with $\beta>0,\, 0<\alpha<N$, among functions $f:\R^N\to [0,1]$ with fixed $L^1$ norm. Another generalisation has been to consider a repulsive term given by the double integral of a more general function of $|y-x|$, not necessarily a power (see for instance~\cite{NP,CFP,Yao}). In particular, the last paper considers a further extension of the admissible objects, which are functions in $L^1\cap L^\infty$, with $L^\infty$ norm not necessarily bounded by $1$.\par

The goal of this paper is to start studying an even more general version of the question. Indeed, on one side we consider as admissible objects the positive measures with a given mass (this possibility was already mentioned before, for instance in~\cite{BCT,CDM}). And on the other side, we consider energies given by a quite general function $\bar g(y-x)$. That is, we aim to study the minimization of the quantity
\[
\E(\mu)=\iint_{\R^N\times\R^N} \bar g(y-x)\, d\mu(y)\, d\mu(x)
\]
among positive measures of given mass. Notice that the functional is well defined as soon as $\bar g$ is l.s.c., and that $\bar g$ must be assumed to be $L^1_{\rm loc}$, since otherwise the energy of any measure is infinite. The ``prototype'' that we have in mind, considering the questions studied in the literature and in particular the energy~(\ref{FrLi}), is
\begin{equation}\label{prototype}
\bar g(v)= |v|^{-\alpha} + |v|^\beta
\end{equation}
with $0<\alpha<N$ and $\beta>0$, but we are able to deal with a wide class of functions $\bar g$. We will prove under very weak assumptions that optimal measures exist and have bounded support. Moreover, under stronger but still quite general assumptions, we will prove that optimal measures are in fact bounded functions. In particular, in these cases, the problem among measures has the same solutions than the problem among $L^\infty$ functions; in Remark~\ref{ConjFrLi} we discuss the meaning of this fact in connection with what is observed in~\cite{FL}. Both the existence and the regularity were already considered in recent papers, and shown under more rigid assumptions on the class of admissible objects or on the function $g$, see in particular~\cite{BCT} and~\cite{CDM}. In the next sections we will briefly describe these results, to make a comparison with the present setting.\par

It is important to observe immediately that the energy is $2$-homogeneous, that is, for any constant $\lambda>0$ we have $\E(\lambda \mu)=\lambda^2 \E(\mu)$. As a consequence, we can restrict ourselves to consider the minimization problem among probability measures, since minimizers for any mass are then always the same measures, up to a multiplicative constant. We remark that this is a huge difference with respect to the problem considered only in the class of sets, or of the functions bounded by $1$. Indeed, in those cases the minimizers are known heavily to depend on the mass that one is considering. This particular feature will be useful in Remark~\ref{ConjFrLi}.\par

We remark also that, in some cases, it is known that optimal measures are unique (up to translations, of course), hence in particular radial; this was done for instance in~\cite{BCT,Lop}, we will present a slightly revised version of their result in Theorem~\mref{Unique}. Also motivated by this, we will consider the minimization problem in two different classes; the first one is the class $\P$ of the probability measures on $\R^N$, and the second one is the subclass $\P_{\rm rad}$ made by the radial ones. Of course the two problems are equivalent under the assumptions that guarantee that minimizers are radial, but the problem for radial densities can be interesting also more in general.\par

We state now our main results. The first one concerns the existence of optimal measures; it is much simpler to prove and it does not require any particular assumption, the function $\bar g$ is not even assumed to be radial. We remark that the same result, or a slight variant of it, has been already proved in~\cite{SST,CCP}, we put it in our paper just to keep the it self-contained, and since it requires less than one page of proof.

\maintheorem{existence}{Existence of optimal measures}{Let $\bar g:\R^N\to\R^+$ be a l.s.c. and $L^1_{\rm loc}$ function such that $\lim_{|x|\to +\infty} \bar g(x)=+\infty$. Then, there exists a minimizer of the energy $\E$ both in the class $\P$ and in the class $\P_{\rm rad}$. Moreover, the support of any optimal measure is contained in a ball of radius $R$, where $R$ only depends on $\bar g$. More precisely, $R$ needs only to be big enough so that, for every $|v|>R/4$, the quantity $\bar g(v)$ is larger than $24$ times the energy of a ball of unit volume.}

Our second result concerns the $L^\infty$ property for optimal measures. It is much harder to obtain, and we can prove it in a quite wide context, but not as general as for Theorem~\mref{existence}. Also motivated by the ``standard case''~(\ref{prototype}), we will consider the following assumption (as usually done, we denote by $B(x,r)$ the ball centered at $x$ and with radius $r$, and write $B_r=B(0,r)$).
\begin{itemize}
\item[\Hz] The function $\bar g:\R^N\to\R^+$ is radial, $L^1_{\rm loc}$, and its restriction to $\R^N\setminus\{0\}$ is ${\rm C}^2$. In addition, calling $\bar g(v)=g(|v|)$, there is a small radius $r>0$ such that $\bar g$ is subharmonic in $B_r\setminus \{0\}$, $g$ and $g'$ are respectively decreasing and increasing in $(0,r)$, and $g(0)=\lim_{t\searrow 0} g(t)$.
\end{itemize}

We underline that the above assumption is extremely weak, and broadly covered by most of the examples already studied. Actually, a typical feature of any attraction-repulsion model is that the interaction energy at distance $t$ explodes for $t\searrow 0$, it is first decreasing and then increasing, and explodes again for $t\to\infty$. So all the requests of assumption \Hz, except the subharmonicity in $B_r\setminus\{0\}$, are satisfied by basically any interesting radial model. Concerning the subharmonicity, this is also true for many but not all models. In particular, in the prototype case~(\ref{prototype}), the function $\bar g$ satisfies \Hz\,whenever $\alpha\geq N-2$ (this is consistent with the powers considered in~\cite{CDM}). An important variant of~(\ref{prototype}) is when a power is replaced by a logarithm. In particular, when $N\geq 2$ then the function $g(t) = \ln(t) + t^{-\alpha}$ satisfies \Hz\,for any $\alpha\geq N-2$, while for $N\leq 2$ the function $g(t)=t^\beta - \ln(t)$ satisfies \Hz\,for any $\beta>0$.\par

Notice that assumption \Hz\,does not require that $g$ explodes at $0$, even though this is true in all the usual cases. Actually, if $g(0)$ is finite then several technical points in the proofs are much simpler; of course, Dirac masses have finite energy if and only if $g(0)$ is finite.\par

Our main result is then the following.
\maintheorem{Linfty}{$L^\infty$ bound for optimal measures}{If $\bar g:\R^N\to\R^+$ satisfies assumption~\Hz\,and $\lim_{t\to\infty} g(t)=+\infty$, there exists a constant $M=M(N,g)$ such that the $L^\infty$ bound $\|\mu\|_{L^\infty}\leq M$ is true in the following cases:
\begin{enumerate}
\item for any minimizer $\mu$, either in $\P$ or in $\P_{\rm rad}$, if the support of $\mu$ is convex and
\begin{equation}\label{addass2}
\limsup_{t\searrow 0} |g'(t)| t^N>0\,;
\end{equation}
\item for any minimizer $\mu$ in $\P_{\rm rad}$ (and also in $\P$ if $N=1$), if $\bar g$ is subharmonic in $\R^N\setminus\{0\}$, strictly subharmonic in some $B_r\setminus\{0\}$, and~(\ref{addass2}) holds;
\item for at least a minimizer $\mu$ in the class $\P_{\rm rad}$ (and also in $\P$ if $N=1$) if $\bar g$ is subharmonic in $\R^N\setminus\{0\}$.
\end{enumerate}
}

It is important to notice that, in the above claim, there is a big difference between the first two cases and the last one. More precisely, in the first two cases we prove that \emph{every} minimizer is in $L^\infty$, while in the third case we only prove the existence of \emph{some} bounded minimizer. The reason why this happens is that to obtain the boundedness of an optimal measure we need the convexity of the support, and this is explicitely asked in the first case; instead, in the other cases, part of the proof is to obtain this convexity, and to get it we need the ``global'' subharmonicity of $g$ (by global we mean ``in the whole $\R^N\setminus\{0\}$''). This global subharmonicity is not needed in case~(1) above since the convexity is guaranteed by assumption. In addition, the global subharmonicity can give the convexity of the support of \emph{every} optimal measure only if it is strict in some small punctured ball $B_r\setminus\{0\}$ and~(\ref{addass2}) holds, and this is why also case~(2) works for every optimal measure. In case~(3) we work by approximation, and this provides some $L^\infty$ minimizer, obtained as weak* limit of $L^\infty$ minimizers of the approximating problems, but not necessarily all the minimizers can be obtained in this way. This is not just a technical failure; indeed, we can easily notice that the $L^\infty$ property for every minimizer is in general false under the assumptions of case~(3) of the above Theorem. To notice that, it is enough to consider the map $g:(0,+\infty)\to\R^+$ given by $g(t)=1$ for $0\leq t \leq 1$ and $g(t)=(t-1)^3$ for $t\geq 1$. It is immediately seen that the corresponding map $\bar g$ satisfies \Hz\,with $r=1$ and is globally subharmonic, hence the above result guarantees the existence of some $L^\infty$ minimizer. But in fact, it is obvious that a measure is a minimizer if and only if its support has diameter less than $1$; therefore, there are $L^\infty$ minimizers, but also singular ones, for instance any Dirac mass.

\begin{remark}[The energies of the form~(\ref{prototype}) covered by Theorem~\mref{Linfty}]
With an elementary calculation, we can determine for which energies of the form~(\ref{prototype}) Theorem~\mref{Linfty} can be applied. In particular, case~(1) can be applied whenever $\alpha\geq N-1$, case~(2) whenever $\alpha\geq N-1$ and $\beta\geq 2-N$, and case~(3) whenever $\alpha\geq N-2$ and $\beta\geq 2-N$ (so in particular in all cases if $N=2$).
\end{remark}

Our last result concerns the uniqueness of the optimal measure --of course, up to a translation. As we said above, this uniqueness has been already established in some cases; more precisely, in~\cite{BCT} it is shown that it is true if $\bar g$ is of the form~(\ref{prototype}) with any $0<\alpha<N$ and $\beta=2$, while in~\cite{Lop} this is generalised to cover again any $0<\alpha<N$, and any $2\leq \beta\leq 4$. The uniqueness up to translation of minimizers is of course of great importance. Indeed, on one side it eliminates any difference between ``any minimizer'' and ``some minimizer'' in the above Theorem~\mref{Linfty}; and on the other side, it eliminates any difference between minimizing in $\P$ and in $\P_{\rm rad}$. Essentially putting together the approach of~\cite{Lop} and the properties of the positive definite functions (which will be formally defined in Section~\ref{sect:unique}) we obtain then the following result.

\maintheorem{Unique}{Uniqueness and radiality of optimal measures}{Let $\bar g(x)=\bar h(x)+|x|^\beta$ be a function satisfying \Hz, with $2\leq \beta\leq 4$. If $\bar h$ is strongly positive definite, there is some minimal measure $\bar\mu\in\P$ which belongs to $\P_{\rm rad}$, and if $\bar h$ is strictly strongly positive definite then $\bar\mu$ is the unique minimal measure up to translations. If $\bar h$ is subharmonic in $\R^N\setminus\{0\}$ and radially decreasing, then it is also strongly positive definite, so in particular there is some minimal measure $\bar\mu\in\P$ which belongs to $\P_{\rm rad}$. If in addition $\bar h$ is strictly subharmonic in some $B_r\setminus \{0\}$, then there is a radius $R_1$ such that the support of every minimal measure in $\P$ is a ball of radius $R_1$.}

We point out that Carrillo and Shu show in \cite{CS} some uniqueness and symmetry results for local minimizers of $\E$ with general conditions on $\bar g$ (and possibly requiring some regularity for the measures themselves or for their potential). It is worth to highlight that their uniqueness/symmetry results, as well as ours, heavily rely on the convexity of $\E$. It is not clear whether one can expect similar conclusions to hold in some other cases without convexity of the energy.\par\medskip

We conclude this introduction by presenting the plan of the paper, which is very symple. First of all, in Section~\ref{sec:exist}, we give the proof of Theorem~\mref{existence}. Then, Section~\ref{sec:bound} is devoted to show Theorem~\mref{Linfty}; to do so, we will first present the potential and study its main properties, in Section~\ref{ssect:prel}; then we study the convexity of the support of optimal measures in Section~\ref{ssect:convex}; then we present our main ``geometric'' estimates in Section~\ref{ssect:main}; and finally we put everything to get the proof of Theorem~\mref{Linfty} in Section~\ref{ssect:pf}. Afterwards, Section~\ref{sect:unique} is devoted to show Theorem~\mref{Unique}.

\section{The proof of Theorem~\mref{existence}\label{sec:exist}}

This section only contains the proof of Theorem~\mref{existence}, which only requires a rather standard argument.

\proofof{Theorem~\mref{existence}}
We can assume without loss of generality that $\bar g$ is symmetric, since the energy does not change if we replace it by $v\mapsto \big(\bar g(v)+\bar g(-v)\big)/2$. Since $\bar g\in L^1_{\rm loc}(\R^N;\R^+)$, then we have $I\leq I' <+\infty$, having set
\begin{align*}
I:= \inf \big\{ \E(\mu):\, \mu\in \P\big\}\,, && I':= \inf \big\{ \E(\mu):\, \mu\in \P_{\rm rad}\big\}\,.
\end{align*}
Let us call for brevity $C=24I'$, and let $R>0$ be such that $\bar g(v) > C$ for every $v\in\R^N,\, |v|>R/4$. Let us now take a measure $\mu$, either in $\P$ or in $\P_{\rm rad}$, such that $\E(\mu)<2I'$. We claim that there exists some $\bar x\in\R^N$ such that $\mu\big(B(\bar x,R/4)\big)>1/2$. Indeed, otherwise we have
\[
\E(\mu)= \iint_{\R^N\times\R^N} \bar g(y-x)\,d\mu(y)\,d\mu(x)
\geq \int_{\R^N} \int_{\R^N\setminus B(x,R/4)} \bar g(y-x)\,d\mu(y)\,d\mu(x)
>\frac C2> \E(\mu)\,,
\]
which is absurd. Then, the existence of $\bar x\in\R^N$ so that $\mu\big(B(\bar x,R/4)\big)>1/2$ follows. We can reduce ourselves to assume that
\[
\mu\big( B_{R/2})\big) > \frac 12\,.
\]
Indeed, in the general case when $\mu\in\P$ it is harmless to assume that $\bar x\equiv 0$, up to a translation, so there is even no need of passing from $R/4$ to $R/2$. Instead, in the radial case --where a translation is not possible-- the above estimate is clearly true if $|\bar x|\leq R/4$. And in turn, we can exclude that $|\bar x|>R/4$, because if this happens then the balls $B(\bar x,R/4)$ and $B(-\bar x,R/4)$ are disjoint, and since $\mu$ is radial we obtain $\mu(\R^N)\geq \mu(B(\bar x,R/4))+\mu(B(-\bar x,R/4)) =2\mu(B(\bar x,R/4))>1$, which is absurd.\par

Let us now call $\eta = \mu\big(\R^N\setminus B_R\big) \in [0,1/2]$, and let $\mu^-$ be the restriction of $\mu$ to $B_R$, that is a measure with mass $1-\eta$. Then, we have
\[\begin{split}
\E(\mu) &= \iint_{\R^N\times\R^N} \bar g(y-x)\,d\mu(y)\,d\mu(x)
\geq \E(\mu^-) + 2 \int_{B_{R/2}} \int_{\R^N\setminus B_R} \bar g(y-x)\,d\mu(y)\,d\mu(x)\\
&\geq \E(\mu^-) + 2 C \mu\big(B_{R/2}\big) \mu\big(\R^N\setminus B_R\big)
\geq \E(\mu^-) + C\eta\,.
\end{split}\]
Keeping in mind that $\E$ is $2$-homogeneous, that $C=24I'>12 \E(\mu)$, and that $(1-\eta)^{-2} \leq 1+6\eta$ since $0\leq \eta\leq 1/2$, we can estimate
\[
\E\big((1-\eta)^{-1}\mu^-\big) = (1-\eta)^{-2} \E(\mu^-) \leq (1+6\eta) \E(\mu^-)
\leq (1+6\eta) \big(\E(\mu) - C\eta\big)
\leq \E(\mu) - \frac C2\, \eta\,.
\]
Therefore, the measure $(1-\eta)^{-1}\mu$, which is a probability measure concentrated in $B_R$, and which is radial if so is $\mu$, has energy lower than $\mu$, and actually strictly lower unless $\mu$ itself is concentrated in $B_R$.\par

Summarizing, from any minimizing sequence for the energy (either in $\P$ or in $\P_{\rm rad}$) we can construct another minimizing sequence, which is done by measures concentrated in the ball $B_R$. By lower semicontinuity of the energy, any weak limit of this latter minimizing sequence is a minimizer (observe that a weak limit of radial measures is clearly still radial). This gives the required existence of minimizer of the energy both in $\P$ and in $\P_{\rm rad}$. Moreover, by the above calculation we obtain that \emph{every} minimizer is concentrated in a ball of radius $R$.
\end{proof}

\section{The proof of Theorem~\mref{Linfty}\label{sec:bound}}

% LOPES dimostra che il minimo esiste ed � unico, quindi � radiale, nel caso di una potenza positiva tra 2 e 4 ed una negativa (Theorem~1.1)
% BCLR studiano la dimensione di Hausdorff di misure ottimali quando il potenziale assomiglia in $0$ ad un potenza.
% CDM studiano il caso in cui il potenziale vicino a $0$ assomiglia ad una potenza ed \`e parecchio regolare, e sfruttando Eulero--Lagrange dimostrano che la misura ottimale � L^\infty. Per ``parecchio regolare'' si intende che vogliono il Laplaciano del potenziale un po' derivabile (in W^{\eps,1}). Anche loro osservano l'unicit� e quindi la radialit� con potenza 2.
% BCT dimostrano l'esistenza nelle nostre stesse ipotesi per� su L^1 con norma L^\infty limitata da $1$, e dimostrano anche che con una potenza negativa e positiva uguale a $2$ il minimo esiste ed � radiale (Lopes lo fa con potenza positiva tra 2 e 4, ma qualche anno dopo). Fanno pure la stima L^\infty per misure ottime con una potenza positiva ed una negativa tra N-2 e N.

This is the main section of the paper, and it is devoted to show Theorem~\mref{Linfty}. In particular, in Section~\ref{ssect:prel} we present the definition and the main properties of the potential; in Section~\ref{ssect:convex} we prove the convexity of the support of optimal measures in some cases; in Section~\ref{ssect:main} we give the main geometric estimates that we need for the proof of the Theorem; and in Section~\ref{ssect:pf} we present the proof.

\subsection{The potential and its main property\label{ssect:prel}}

The aim of this first section is to show that the potential constantly attains its minimum on the support of any minimising measure. Let us be more precise. First of all, we present the (standard) definition of potential. Notice that, in order to define the potential, we assume the function $\bar g$ to be symmetric (but not necessarily radial). However, as already noticed in the proof of Theorem~\mref{existence}, this assumption can always be done without loss of generality, since the problem with the function $\bar g$ is completely equivalent to the problem with the function $v\mapsto \big(\bar g(v)+\bar g(-v)\big)/2$.

\begin{defin}[Potential]
Given a l.s.c., symmetric and $L^1_{\rm loc}$ function $\bar g:\R^N\to\R^+$, for any positive measure $\mu$ we call \emph{potential associated to $\mu$} the function $\psi_\mu:\R^N\to\R^+$ defined as
\[
\psi_\mu(x) = \int_{\R^N} \bar g(y-x) \,d\mu(y)\,.
\]
Similarly, for any function $f\in L^1(\R^N)$, either positive or bounded and compactly supported, the \emph{potential associated to $f$} is the function $\psi_f:\R^N\to\R$ given by
\[
\psi_f(x) = \int_{\R^N} \bar g(y-x) f(y)\,dy\,.
\]
\end{defin}

An immediate but crucial property of the potential is that
\begin{equation}\label{proppot}
\E(\mu) = \int_{\R^N} \psi_\mu(x)\, d\mu(x)\,.
\end{equation}
As a consequence, it is easy to guess that, whenever $\mu$ is an optimal measure, the potential attains its minimum over the support of $\mu$. A similar result has been already proved under different assumptions in many earlier papers, for instance~\cite{BCT,CDM}. We now prove this fact under very weak assumptions, which do not even guarantee the existence of optimal measures --of course, if $\bar g$ is so that there are no minimizers of the energy, then the result is emptily true.
\begin{prop}\label{lowerlevel}
Let $\bar g:\R^N\to\R^+$ be a l.s.c., symmetric and $L^1_{\rm loc}$ function. Let $\mu$ be a minimizer of the energy, either in $\P$ or in $\P_{\rm rad}$ (in this latter case we also assume $\bar g$ to be radial). Then we have
\begin{align}\label{cost>}
\psi_\mu(x) = \E(\mu) \quad \hbox{for $\mu$-a.e. $x\in\spt(\mu)$}\,, && \psi_\mu(x) \geq \E(\mu) \quad \hbox{for $\L^N$-a.e. $x\in\R^N$}\,.
\end{align}
\end{prop}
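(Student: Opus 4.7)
The strategy is a first-variation argument built on the bilinear expansion
\[
\E(\mu + \sigma) = \E(\mu) + 2\int_{\R^N} \psi_\mu \, d\sigma + \E(\sigma),
\]
valid whenever the three terms on the right are finite. I will test the minimality of $\mu$ against two families of small perturbations: one that reshuffles mass inside $\spt(\mu)$, producing $\psi_\mu = \E(\mu)$ $\mu$-a.e., and one that injects a small absolutely continuous piece where the potential is low, producing $\psi_\mu \geq \E(\mu)$ $\L^N$-a.e. In the radial setting $\psi_\mu$ is radial (since $\bar g$ and $\mu$ both are), so all the sets and measures constructed below can be chosen radially symmetric, ensuring that the perturbations stay in $\P_{\rm rad}$.

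For the first identity, set $c := \E(\mu)$ and assume by contradiction that $\psi_\mu$ is not $\mu$-a.e. constant. Since $\int \psi_\mu \, d\mu = c$ by~(\ref{proppot}), there exist $a < b$ and disjoint Borel sets $A \subset \{\psi_\mu \geq b\}$, $B \subset \{\psi_\mu \leq a\}$ with $\mu(A), \mu(B) > 0$. Set $\mu_1 := \mu|_A/\mu(A)$, $\mu_2 := \mu|_B/\mu(B)$, and consider $\mu_\eps := \mu + \eps(\mu_2 - \mu_1)$, a probability measure (of the required type) for $\eps \leq \min\{\mu(A), \mu(B)\}$. Expansion yields
\[
\E(\mu_\eps) - \E(\mu) = 2\eps \left( \int \psi_\mu \, d\mu_2 - \int \psi_\mu \, d\mu_1 \right) + \eps^2 \E(\mu_2 - \mu_1) \leq 2\eps(a - b) + \eps^2 \E(\mu_2 - \mu_1).
\]
The quadratic remainder is finite because $\E(\mu_i) \leq \E(\mu)/\mu(A_i)^2$ and the cross term is controlled by $c/(\mu(A)\mu(B))$, both using only $\E(\mu) = c < \infty$. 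Hence for small $\eps > 0$ one gets $\E(\mu_\eps) < c$, contradicting minimality, so $\psi_\mu$ equals a constant $\mu$-a.e., and~(\ref{proppot}) identifies this constant as $c$.

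For the second identity, assume by contradiction that $\L^N(\{\psi_\mu < c\}) > 0$; then there exist $\delta > 0$ and a bounded Borel set $D \subset \{\psi_\mu \leq c - \delta\}$ with $0 < \L^N(D) < \infty$ (radial in the radial case, possible because $\psi_\mu$ is radial). Define $\nu := (\L^N \res D)/\L^N(D)$ and $\mu_\eps := (1 - \eps)\mu + \eps\nu$; the local integrability of $\bar g$ and the boundedness of $D$ give $\E(\nu) < \infty$. The expansion gives
\[
\E(\mu_\eps) - \E(\mu) = 2\eps \left( -\E(\mu) + \int \psi_\mu \, d\nu \right) + O(\eps^2) \leq -2\eps\delta + O(\eps^2),
\]
which is negative for $\eps$ small, again contradicting minimality.

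The only delicate step is controlling the quadratic remainder $\E(\mu_2 - \mu_1)$ in the first perturbation: since $\mu_2 - \mu_1$ is signed, $\E$ applied to it is not automatically finite, and one must verify it using the finite energy of $\mu$ and the $L^1_{\rm loc}$ hypothesis on $\bar g$, as outlined above. Once this is in place, the rest is routine, and the radial case requires no further ideas beyond choosing radial versions of $A$, $B$, and $D$.
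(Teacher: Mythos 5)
Your proof is correct and follows essentially the same first-variation argument as the paper: for the first identity you perturb by $\mu_\eps=\mu+\eps(\mu_2-\mu_1)$ with $\mu_1,\mu_2$ normalized restrictions of $\mu$ to the super- and sub-level sets of $\psi_\mu$, and for the second you perturb by $\mu_\eps=(1-\eps)\mu+\eps\nu$ with $\nu$ a bounded density on a low-potential set. The only (minor) difference is that you control the cross term $\E(\mu,\nu)$ directly via $\int\psi_\mu\,d\nu\le \E(\mu)$, whereas the paper invokes compact support of $\mu$ from Theorem~\mref{existence} at that point; your route is slightly cleaner since that theorem's hypotheses are not assumed in the proposition, but the conceptual structure is identical.
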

\begin{proof}
We start by showing that $\psi_\mu$ is constant $\mu$-a.e. in the support of $\mu$. The fact that this constant is exactly $\E(\mu)$ will then be an obvious consequence of~(\ref{proppot}). If the claim is false, then there are two constants $\lambda_1<\lambda_2$ and two measures $\mu',\, \mu''\leq \mu$ with $\|\mu'\|=\|\mu''\|>0$, radial if so are $\mu$ and $\bar g$, and such that
\begin{align*}
\psi_\mu(x) \leq \lambda_1 \quad \hbox{for $\mu$-a.e. $x\in\spt(\mu')$}\,, && \psi_\mu(x) \geq \lambda_2 \quad \hbox{for $\mu$-a.e. $x\in\spt(\mu'')$}\,.
\end{align*}
For any $0<\eps<1$, the measure $\mu_\eps = \mu + \eps(\mu'-\mu'')$ is still a positive, probability measure, and it is radial if so is $\mu$. An easy calculation gives us that
\[
\E(\mu_\eps)-\E(\mu)= 2 \eps \iint_{\R^N} \psi_\mu(x) d(\mu'-\mu'')(x)+\eps^2 \E(\mu'-\mu'')
\leq 2 \eps \|\mu'\| (\lambda_1-\lambda_2)+\eps^2 \E(\mu'-\mu'')\,,
\]
and then we derive that $\E(\mu_\eps)<\E(\mu)$ for $\eps\ll 1$, contradicting the minimality of $\mu$. The first property in~(\ref{cost>}) is then established.\par

Concerning the second one, let us assume that it is false. Then, there exists some $\lambda< \E(\mu)$ and some bounded Borel set $E\subseteq\R^N$, with strictly positive Lebesgue measure, such that
\[
\psi_\mu(x) \leq \lambda \quad\hbox{for $\L^N$-a.e. $x\in E$}\,.
\]
The set $E$ can be taken radially symmetric if $\mu$ and $\bar g$ are radial. Notice that, by the first property in~(\ref{cost>}), $\mu(E)=0$. This time, for $0<\eps<|E|^{-1}$ we set
\[
\mu_\eps= (1-\eps |E|)\mu + \eps \L^N\res E= \mu +\eps \big( \L^N\res E - |E|\mu\big)\,,
\]
which is again a positive probability measure, radial if so are $\mu$ and $\bar g$ (and then $E$). We have
\[\begin{split}
\E(\mu_\eps)-\E(\mu) &= 2\eps \int_{\R^N} \psi_\mu(x) \, d\big(\L^N\res E(x) - |E|\mu\big)(x)+\eps^2 \E\big( \L^N\res E - |E|\mu\big)\\
&\leq 2\eps (\lambda -\E(\mu))|E|+\eps^2 \E\big( \L^N\res E - |E|\mu\big)\,.
\end{split}\]
Since $E$ is bounded and $\bar g\in L^1_{\rm loc}$, and since $\mu$ has finite energy and is compactly supported by Theorem~\mref{existence}, we derive that $\E(\L^N\res E-|E|\mu)<+\infty$. By the fact that $\lambda<\E(\mu)$, we deduce that $\E(\mu_\eps)<\E(\mu)$ for $\eps\ll 1$, contradicting the minimality of $\mu$. Also the second property in~(\ref{cost>}) is then obtained.
\end{proof}

\begin{remark}\label{forevery}
Keeping in mind that $\psi_\mu$ is l.s.c. on $\spt\mu$ and continuous on $\R^N\setminus\spt\mu$, from~(\ref{cost>}) we actually deduce
\begin{align*}
\psi_\mu(x) \leq \E(\mu) \quad \hbox{for every $x\in\spt(\mu)$}\,, && \psi_\mu(x) \geq \E(\mu) \quad \hbox{for every $x\notin\spt(\mu)$}\,.
\end{align*}
\end{remark}

\subsection{The convexity of the support of optimal measures\label{ssect:convex}}

In this section we show that the support of an optimal measure is convex in some cases. As in Section~\ref{ssect:prel}, our assumptions are not strong enough to guarantee the existence of optimal measures, hence what we prove is that every minimizing measure, if any, has convex support. This kind of result was already present in the proof of \cite[Theorem 4.1]{CS}, but they made some different hypotheses on the kernel $\bar g$ and some a-priori regularity assumption on the potential generated by an optimal measure. On the other hand, they work with local minimizers with respect to the $\infty$-Wasserstein distance, while we are interested only in the global minimizers of $\E$. We start with the $1$-dimensional case.

\begin{prop}\label{convexN=1}
Let $\bar g:\R\to \R^+$ be a l.s.c., symmetric and $L^1_{\rm loc}$ function, whose restriction to $(0,+\infty)$ is convex, and strictly convex in a right neighborhood of $0$. Let $\mu$ be a measure which minimizes the energy either in $\P$ or in $\P_{\rm rad}$. Then the support of $\mu$ is a closed segment.
\end{prop}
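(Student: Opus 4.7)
The plan is to argue by contradiction. Suppose $\spt(\mu)$ is not a closed segment; then there exist $c<d$ with $c,d\in\spt(\mu)$ and $(c,d)\cap\spt(\mu)=\emptyset$. By Proposition~\ref{lowerlevel} and Remark~\ref{forevery}, we have $\psi_\mu(c),\psi_\mu(d)\le\E(\mu)$, while $\psi_\mu\ge\E(\mu)$ throughout the gap $(c,d)$.

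The first step is to prove that $\psi_\mu$ is convex on $(c,d)$. Since $\mu\big((c,d)\big)=0$, for every $x\in(c,d)$ one can split
\[
\psi_\mu(x)=\int_{(-\infty,c]} g(x-y)\,d\mu(y)+\int_{[d,+\infty)} g(y-x)\,d\mu(y),
\]
and in each integrand the argument of $g$ is strictly positive and depends affinely on $x$. Since $g$ is convex on $(0,+\infty)$, each integrand is convex in $x$, and convexity is preserved under integration, so $\psi_\mu$ is convex on $(c,d)$.

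Combining this with the endpoint bounds $\psi_\mu(c),\psi_\mu(d)\le\E(\mu)$, the maximum principle for convex functions --together with the one-sided continuity of $\psi_\mu$ at $c,d$ from inside the gap, which holds as long as $\mu$ has no atoms at those points (the atomic case can be handled separately and is actually more favourable for us)-- yields $\psi_\mu\le\E(\mu)$ on $(c,d)$, hence $\psi_\mu\equiv\E(\mu)$ there. The contradiction then arises from the strict convexity of $g$ near $0$. Let $\delta>0$ be such that $g$ is strictly convex on $(0,\delta)$; since $c\in\spt(\mu)$ while $(c,d)\cap\spt(\mu)=\emptyset$, we have $\mu\big([c-\eta,c]\big)>0$ for every $\eta>0$. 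For $\eta$ small and $[a,b]$ a short closed subinterval of $(c,d)$ sufficiently close to $c$, every difference $x-y$ with $x\in[a,b]$ and $y\in[c-\eta,c]$ lies in $(0,\delta)$; hence $x\mapsto g(x-y)$ is strictly convex on $[a,b]$ for each such $y$, and the positivity of $\mu\big([c-\eta,c]\big)$ promotes the integrated convexity to strict convexity of $\psi_\mu$ on $[a,b]$, contradicting its constancy there.

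In the $\P_{\rm rad}$ case, Proposition~\ref{lowerlevel} still applies (the perturbations built in its proof can be taken radial when $\mu$ is), and the above argument yields again that $\spt(\mu)$ is a closed interval; by symmetry about $0$ it is then a (possibly degenerate) segment of the form $[-R,R]$. The most delicate point of the plan, in my view, is the passage from convexity of $\psi_\mu$ on $(c,d)$ with boundary bounds to the pointwise identity $\psi_\mu\equiv\E(\mu)$: this requires controlling the one-sided limits of $\psi_\mu$ at the gap endpoints from inside, which reduces to the behaviour of $\mu$ at the atoms $c,d$, if any. Everything else in the argument is a rather routine exploitation of the convexity and strict convexity of $g$ on $(0,+\infty)$.
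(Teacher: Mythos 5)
Your argument reaches the conclusion, but via a genuinely different core step than the paper's. Both proofs set up identically: take a spurious gap $(c,d)$ with $c,d\in\spt(\mu)$, use Proposition~\ref{lowerlevel} and Remark~\ref{forevery} to get $\psi_\mu(c),\psi_\mu(d)\le\E(\mu)$ together with $\psi_\mu\ge\E(\mu)$ on $(c,d)$, and observe that $\psi_\mu$ is convex on the gap, strictly so near the endpoints thanks to the strict convexity of $g$ near $0$. The difference is in how the contradiction is extracted. The paper never needs to control the one-sided limits $\psi_\mu(c^+)$, $\psi_\mu(d^-)$: from strict convexity and $\psi_\mu>\E(\mu)$ near the endpoints it deduces that $\psi_\mu$ is strictly monotone, say strictly decreasing, on $(c,c+\eps)$ for some $\eps<d-c$, and then integrates the pointwise inequality $\bar g(c-y)+\bar g(c+\eps-y)-2\bar g(c+\eps/2-y)\ge 0$ (valid for $\mu$-a.e.\ $y$ because $\eps<d-c$ keeps all three arguments of $\bar g$ on one side of $0$, so symmetry and convexity of $g$ on $(0,\infty)$ apply) against $\mu$ to obtain $\psi_\mu(c)\ge 2\psi_\mu(c+\eps/2)-\psi_\mu(c+\eps)>\psi_\mu(c+\eps/2)>\E(\mu)\ge\psi_\mu(c)$, an immediate contradiction that uses only the value $\psi_\mu(c)$ itself. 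You instead propagate the endpoint bounds inward via a maximum principle for convex functions, reaching $\psi_\mu\equiv\E(\mu)$ on $(c,d)$ and then contradicting strict convexity. This route is viable, but it genuinely requires $\lim_{x\to c^+}\psi_\mu(x)=\psi_\mu(c)$ and $\lim_{x\to d^-}\psi_\mu(x)=\psi_\mu(d)$, which you flag as the delicate point but leave unproved. Note that lower semicontinuity of $\psi_\mu$ only gives $\liminf_{x\to c^+}\psi_\mu(x)\ge\psi_\mu(c)$, which is the \emph{wrong} direction; the needed upper bound takes a real argument (using that $g$, being convex on $(0,\infty)$ and $L^1_{\rm loc}$, is either continuous up to $0$ or eventually decreasing near $0$, then dominated convergence, plus care when $\mu$ has an atom at $c$: that case is ``favourable'' only if $\bar g$ is continuous at $0$, which holds under \Hz\ but is not among the explicit hypotheses of this Proposition). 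The paper's discrete three-point inequality is designed precisely to sidestep this subtlety, which makes it the more robust of the two approaches.
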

\begin{proof}
Let us assume that $\mu$ is a minimal measure, either in $\P$ or in $\P_{\rm rad}$, and that its support is not a segment. As a consequence, there is an open segment $(a,b)\subseteq\R$ such that $\spt(\mu)$ does not intersect $(a,b)$, but it contains both $\{a\}$ and $\{b\}$.\par

By construction, the function $\psi_\mu$ is convex in the interval $(a,b)$. Moreover, it is \emph{strictly} convex in $(a,a+\eps)$ and in $(b-\eps,b)$ for some $\eps>0$, much smaller than $b-a$. By Proposition~\ref{lowerlevel}, we deduce that $\psi_\mu\geq \E(\mu)$ in the whole open segment $(a,b)$, and that, up to possibly decrease the value of $\eps>0$, the inequality is strict in $(a,a+\eps)\cup (b-\eps,b)$. Consequently, and again up to further decrease $\eps$, the function $\psi_\mu$ is either strictly decreasing in $(a,a+\eps)$, or strictly increasing in $(b-\eps,b)$, or both. By symmetry, we assume without loss of generality that $\psi_\mu$ is strictly decreasing in $(a,a+\eps)$. Let us now notice that
\begin{equation}\label{ofas}
\psi_\mu(a) + \psi_\mu(a+\eps) - 2\psi_\mu(a+\eps/2) = \int_{x\in\R} \bar g(a-y) + \bar g(a+\eps-y) - 2 \bar g(a+\eps/2 - y)\,d\mu(y)\,.
\end{equation}
Since $\bar g$ is convex in $(0,+\infty)$ and symmetric, and since $\eps < b-a$, for $\mu$-a.e. $y$ we have that
\[
\bar g(a-y) + \bar g(a+\eps-y) - 2 \bar g(a+\eps/2 - y)\geq 0\,.
\]
Inserting this estimate in~(\ref{ofas}) we deduce that
\[
\psi_\mu(a) \geq 2 \psi_\mu(a+\eps/2) - \psi_\mu(a+\eps) > \psi_\mu(a+\eps/2)\,,
\]
where we have also used that $\psi_\mu$ is strictly decreasing in $(a,a+\eps)$. And finally, this is absurd since $\psi_\mu(a+\eps/2)>\E(\mu)$, as already noticed, while $\psi_\mu(a)\leq \E(\mu)$ by Proposition~\ref{lowerlevel} and since $\psi_\mu$ is l.s.c. by construction.
\end{proof}

The idea of the proof in the general case when $N\geq 2$ is similar, one only needs more care in the construction. A geometrical property that we are going to use is the following one.
\begin{lemma}\label{elgele}
For any $N\geq 2$, there exists a geometrical constant $C_N>1$ such that, if $\delta,\, \eta,\, d,\, r$ are four positive numbers such that
\begin{align*}
\eta > C_N \delta\,, && d> C_N\eta,\, && r >C_N d\,,
\end{align*}
then one has
\begin{equation}\label{claimelgele}
\frac{\H^{N-1}\Big(\Big\{ x\in\partial B(0,r):\,\big| x - (r-\eta){\rm e}_1 \big|\in (d,d+\delta)\Big\}\Big)}{(N-1)\omega_{N-1} d^{N-2} \delta} \in\bigg[ \frac 12\, ,\, 2 \bigg]\,.
\end{equation}
\end{lemma}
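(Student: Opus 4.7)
The plan is to reduce to a one-dimensional integral via rotational symmetry around the ${\rm e}_1$-axis, and then show that under the smallness hypotheses the natural leading-order approximation is accurate enough that the ratio stays in $[1/2,2]$. Writing $p=(r-\eta){\rm e}_1$, for any $x\in\partial B(0,r)$ at polar angle $\theta$ from ${\rm e}_1$ the law of cosines gives $|x-p|^2=r^2+(r-\eta)^2-2r(r-\eta)\cos\theta$. Setting $s=|x-p|$ and solving, one obtains
\[
\sin^2\theta = \frac{(s^2-\eta^2)\bigl((2r-\eta)^2-s^2\bigr)}{4r^2(r-\eta)^2}.
\]

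Parametrizing $\partial B(0,r)$ by $(\theta,\omega')\in(0,\pi)\times S^{N-2}$ with surface element $r^{N-1}\sin^{N-2}\theta\,d\theta\,d\H^{N-2}(\omega')$, and using the change of variable $s\,ds=r(r-\eta)\sin\theta\,d\theta$, the numerator of (\ref{claimelgele}) can be rewritten as
\[
\frac{(N-1)\omega_{N-1}\,r^{N-2}}{r-\eta}\int_d^{d+\delta} s\,\sin^{N-3}\theta(s)\,ds.
\]
The key step is then to show that, for every $s\in(d,d+\delta)$, one has $\sin\theta(s)=(s/r)(1+\varepsilon)$ with $|\varepsilon|\leq \varepsilon_N(C_N)\to 0$ as $C_N\to\infty$. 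Indeed, $d>C_N\eta$ gives $s^2-\eta^2=s^2(1+O(C_N^{-2}))$, while $r>C_N d>C_N^2\eta$ gives $(2r-\eta)^2-s^2=4r^2(1+O(C_N^{-1}))$ and $(r-\eta)^2=r^2(1+O(C_N^{-2}))$. Plugging these into the formula for $\sin^2\theta$ and taking a square root yields the claim.

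Finally, the hypothesis $\eta>C_N\delta$ forces $\delta/d<1/C_N^2$, and hence $\int_d^{d+\delta} s^{N-2}\,ds = d^{N-2}\delta\,(1+O_N(C_N^{-2}))$; similarly $r/(r-\eta)=1+O(C_N^{-2})$. Combining these estimates, the numerator of (\ref{claimelgele}) equals $(N-1)\omega_{N-1}\,d^{N-2}\,\delta\,(1+O_N(C_N^{-1}))$, so the ratio in (\ref{claimelgele}) tends to $1$ as $C_N\to\infty$, uniformly in the free parameters $\delta,\eta,d,r$ that satisfy the hypothesis. Choosing $C_N$ large enough (depending only on $N$) so that the relative error is at most $1/2$ concludes the proof. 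The only real obstacle is the careful bookkeeping of multiplicative error factors; the geometric picture is entirely routine, amounting to the fact that, from the viewpoint of the interior point $p$, the spherical shell cuts $\partial B(0,r)$ almost exactly in a tube that looks like the flat $(N-1)$-dimensional annulus of inner radius $d$ and width $\delta$ once the scale separations $\delta\ll\eta\ll d\ll r$ are enforced.
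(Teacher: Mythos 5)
Your proposal is correct and follows essentially the same geometric idea as the paper's proof (slicing the spherical cap into $(N-2)$-spheres indexed by distance from the interior point $P$ and comparing to a flat annulus), but replaces the paper's informal figure-based sketch with an explicit law-of-cosines formula and a rigorous change of variables. The bookkeeping of the error factors $1+O(C_N^{-1})$ is sound, including the borderline exponent $\sin^{N-3}\theta$ for $N=2$.
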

\begin{proof}
This is an elementary geometrical property, easy to establish with the aid of Figure~\ref{Fig:geom}. Let us consider four constants $\delta<\eta<d<r$, each quite smaller than the following one. Let us call $P= (r-\eta){\rm e}_1$ as in the figure. The points of $\partial B(0,r)$ having distance exactly $d$ from $P$ are the intersection between the spheres $\partial B(0,r)$ and $\partial B(P,d)$, hence they are a $(N-2)$-dimensional sphere contained in a hyperplane orthogonal to the direction ${\rm e}_1$. 
\begin{figure}[thbp]
\begin{tikzpicture}[>=>>>>]
\draw (10,0) arc(0:15:10);
\draw (10,0) arc(0:-15:10);
\fill(0,0) circle (2pt);
\draw (0,0) node[anchor=north east] {$O$};
\fill(9,0) circle (2pt);
\draw (9,0) node[anchor=north east] {$P$};
\draw[<->] (0.097,-0.022) -- (9.69,-2.24);
\draw (4.87,-1.12) node[anchor=north] {$r$};
\draw[<<<->>>] (9.075,0) -- (9.975,0);
\draw (9.5,0) node[anchor=north] {$\eta$};
\draw[<<<->>>] (9.047,0.087) -- (9.80,1.72);
\draw (9.45,0.75) node[anchor=south east] {$d$};
\draw[line width=2pt] (9.85,1.74) arc(10:12:10);
\draw[line width=2pt] (9.85,-1.74) arc(-10:-12:10);
\draw[<<<->>>] (10.2,1.74) -- (10.2,2.1);
\draw (10.3,1.92) node[anchor=west] {$\approx \delta$};
\draw[<<<->>>] (8.65,0) -- (8.65,1.74);
\draw (8.7,0.87) node[anchor=east] {$\rho_0$};
\draw[<<<->>>] (8.1,0) -- (8.1,2.1);
\draw (8.15,1.05) node[anchor=east] {$\rho_\delta$};
\draw[dotted] (8.1,2.1)-- (9.78,2.1);
\draw[dotted] (8.65,1.74)-- (9.85,1.74);
\end{tikzpicture}
 \caption{The situation in Lemma~\ref{elgele}.}\label{Fig:geom}
\end{figure}
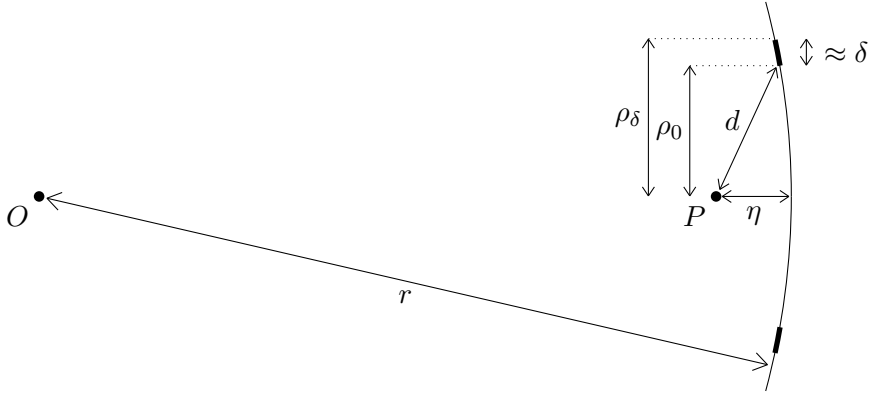
The radius of this sphere, call it $\rho_0$, is smaller than $d$, but the ratio between $\rho_0$ and $d$ becomes arbitrarily close to $1$ if the ratii $d/\eta$ and $r/d$ are both large enough. In the very same way, for any $0\leq t \leq \delta$, the points of $\partial B(0,r)$ having distance exactly $d+t$ from $P$ are a $(N-2)$-dimensional sphere, with radius $\rho_t$ very close to $d+t$. Moreover, $\rho_\delta-\rho_0\approx \delta$, that is, the ratio between $\rho_\delta-\rho_0$ and $\delta$ is arbitrarily close to $1$ as soon as $\eta/\delta,\, d/\eta,\, r/d$ are large enough. In addition, the centers of all these spheres are all on the line $\R {\rm e}_1$, and they are almost coincident with respect to $\delta$. More formally, if we call $C_t$ the center of the sphere corresponding to any $0\leq t \leq \delta$, we have that the ratio $|C_t - C_s|/|t-s|$ is arbitrarily close to $0$ as soon as $\eta/\delta,\, d/\eta,\, r/d$ are large enough.\par

Summarizing, the $\H^{N-1}$-measure of the union of these spheres is arbitrarily close to the measure of a $(N-1)$-dimensional annulus contained between two concentric spheres of radii $d$ and $d+\delta$, which in turn is arbitrarily close to $(N-1)\omega_{N-1} d^{N-2} \delta$ if $d/\delta$ is large enough. This completes the proof (in particular, instead of $1/2$ and $2$ we could have used $a$ and $1/a$ for any $a<1$).
\end{proof}

\begin{prop}\label{convexN>1}
Let $\bar g:\R^N\setminus\{0\}\to\R^+$ be a radial, ${\rm C}^2$, l.s.c., $L^1_{\rm loc}$ function, which is subharmonic in $\R^N\setminus \{0\}$ and strictly subharmonic in $B_r\setminus\{0\}$ for some $r>0$. Then, the support of any measure which minimizes the energy in $\P_{\rm rad}$ is a closed ball.
\end{prop}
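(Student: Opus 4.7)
The plan is to argue by contradiction via the maximum principle. Let $\mu\in\P_{\rm rad}$ be a minimizer and assume, for a contradiction, that $\spt\mu$ is not a closed ball. Since $\mu$ is radial, $\spt\mu=\bigcup_{s\in A}\partial B_s$ for some closed $A\subseteq[0,R_0]$, with $R_0$ from Theorem~\mref{existence}. If $\spt\mu$ is not a closed ball, then $A$ has a maximal gap: either (i) $R_1:=\min A>0$ (so $0\notin A$), with gap $\Omega:=B_{R_1}$; or (ii) there exist $a,b\in A$ with $0<a<b$ and $(a,b)\cap A=\emptyset$, with gap $\Omega:=\{a<|x|<b\}$. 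I will carry out case (ii); case (i) is analogous and simpler, as $\Omega$ has only one boundary component. Since $\spt\mu\cap\Omega=\emptyset$ and $\bar g\in C^2(\R^N\setminus\{0\})$, differentiation under the integral gives $\psi_\mu\in C^2(\Omega)$ with $\Delta\psi_\mu(x)=\int \Delta_x\bar g(y-x)\,d\mu(y)\ge 0$ on $\Omega$, by the subharmonicity of $\bar g$. Writing $\psi_\mu(x)=\tilde\psi(|x|)$ by radiality, Remark~\ref{forevery} gives $\tilde\psi\ge\E(\mu)$ pointwise on $(a,b)$ and $\tilde\psi(a),\tilde\psi(b)\le\E(\mu)$.

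The main obstacle is to upgrade the pointwise boundary bounds $\tilde\psi(a),\tilde\psi(b)\le\E(\mu)$ (which live on the spheres) to the boundary limits from inside the gap, $\tilde\psi(a^+):=\lim_{\rho\to a^+}\tilde\psi(\rho)=\E(\mu)$ and $\tilde\psi(b^-)=\E(\mu)$. This is a continuity statement for the potential approaching the support from outside, which I plan to prove by dominated convergence. For $y=s\theta\in\spt\mu$ with $s=|y|\le a$ and $x=\rho\,e_1$ with $\rho\ge a$, writing $\varphi$ for the angle between $y$ and $e_1$, the elementary identity
\[
|y-x|^2 - |y-a\,e_1|^2 = (\rho-a)(\rho+a - 2s\cos\varphi)\ge 0
\]
shows $|y-x|\ge|y-a\,e_1|$, both factors being nonnegative ($\rho+a\ge 2a\ge 2s\ge 2s\cos\varphi$). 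Combined with the local behavior of $\bar g$ near $0$ (monotonically decreasing in a right-neighborhood of $0$ when $g$ blows up there, and bounded otherwise) and the boundedness of $\bar g$ on compact subsets avoiding $0$, this yields a $\mu$-integrable dominating function $\bar g(|y-a\,e_1|)+C$ for $\bar g(|y-\rho\,e_1|)$ over small $\rho-a\ge 0$. For $y$ with $|y|\ge b$, $|y-\rho\,e_1|$ is bounded below uniformly in $\rho$ close to $a$, so $\bar g$ is uniformly bounded there. Dominated convergence then gives $\tilde\psi(\rho)\to\tilde\psi(a)\le\E(\mu)$, and combining with $\tilde\psi\ge\E(\mu)$ on $(a,b)$ forces $\tilde\psi(a^+)=\E(\mu)$. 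A symmetric analysis at $b^-$ completes the continuity step.

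With these boundary limits in hand, the maximum principle for the subharmonic $\psi_\mu$ on $\Omega$ gives $\psi_\mu\le\E(\mu)$ on $\Omega$; combined with $\psi_\mu\ge\E(\mu)$, this forces $\psi_\mu\equiv\E(\mu)$ on $\Omega$, hence $\Delta\psi_\mu\equiv 0$ there. On the other hand, since $a\in A$, the measure $\mu$ has positive mass within distance $r$ of any $x_0\in\Omega$ sufficiently close to $\partial B_a$ (either on $\partial B_a$ itself, when $\mu$ charges that sphere, or on inner spheres $\partial B_s$ with $s<a$ arbitrarily close to $a$ otherwise). The strict subharmonicity of $\bar g$ in $B_r\setminus\{0\}$ then yields $\Delta\psi_\mu(x_0)>0$, contradicting $\Delta\psi_\mu\equiv 0$ on $\Omega$ and completing the argument.
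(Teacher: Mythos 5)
Your overall strategy (reduce to the radial potential, pick a maximal radial gap of the support, use subharmonicity of $\psi_\mu$ on the gap, and derive a contradiction via the strict subharmonicity near the boundary of the gap) is structurally close to the paper's, and several pieces of your argument are correct and clean: the dominated-convergence proof of continuity of $\psi_\mu$ at the \emph{inner} boundary sphere $\partial B_a$ of the gap, the differentiation under the integral in the gap, and the final contradiction from strict subharmonicity once you know $\Delta\psi_\mu\equiv 0$ on the gap while mass sits within distance $r$ of $\partial B_a$.

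However, there is a genuine gap at the step you dismiss as ``a symmetric analysis at $b^-$.'' The two boundary spheres of the gap are \emph{not} symmetric. Your elementary identity reads, for $x=\rho\,e_1$ and $y=s\theta$ with $\varphi$ the angle, $|y-x|^2-|y-be_1|^2=(\rho-b)\big(\rho+b-2s\cos\varphi\big)$. At the inner boundary ($s=|y|\le a$, $\rho\ge a$) both factors are nonnegative because $\rho+a\ge 2a\ge 2s\ge 2s\cos\varphi$, and that is what powers your dominating function $\bar g(|y-ae_1|)+C$. At the outer boundary ($s\ge b$, $\rho<b$) the first factor is negative, and the sign of $\rho+b-2s\cos\varphi$ depends on $\varphi$: it is negative, so $|y-\rho e_1|\ge|y-be_1|$, only for $\cos\varphi>(\rho+b)/(2s)$, a threshold that approaches $1$ as $\rho\to b^-$. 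Outside this shrinking cone around $be_1$, approaching $b$ from inside the gap \emph{decreases} $|y-x|$, so $g(|y-x|)$ can grow without a $\rho$-uniform dominating function. Consequently the dominated convergence argument for $\tilde\psi(b^-)=\tilde\psi(b)$ does not go through; one cannot a priori exclude a jump with $\tilde\psi(b^-)>\E(\mu)\ge\tilde\psi(b)$. This is precisely the scenario the paper addresses in Step~IV of its proof, and it is the reason the paper needs the careful spherical-cap estimate of Lemma~\ref{elgele}: that lemma substitutes for the failed domination by comparing, cap by cap, the $\H^{N-1}$-measure of $\partial B(0,r)\cap B(\bar x,d+\delta)\setminus B(\bar x,d)$ against the corresponding quantity centred at $w$, yielding $\psi_1(w)\leq 4\psi_1(\bar x)$. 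Without an analogue of this, your maximum principle argument never gets its upper boundary datum at $\partial B_b$, and the proof does not close. Note also that your ``case (i)'' ($\Omega=B_{R_1}$, $0\notin A$) is \emph{not} the simpler case: its single boundary component $\partial B_{R_1}$ is exactly the hard outer-boundary type.
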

\begin{proof}
Let $\mu$ be a measure minimizing the energy in $\P_{\rm rad}$. Since both $\bar g$ and $\mu$ are radial, then so is also the potential $\psi_\mu$. Let us define for brevity $f:\R^+\to\R^+$ the function such that $\psi_\mu(x)=f(|x|)$. Let us assume that the support of $\mu$ is not a closed ball, and let us look for a contradiction. Among all the open bounded intervals $I$ in $(0,+\infty)$ such that the annulus $\{x\in\R^N:\, |x|\in I\}$ does not intersect $\spt(\mu)$, there is at least one, say $(a,b)$, which is maximal with respect to the inclusion.\par

Notice that $\psi_\mu$ is a subharmonic radial function on $\R^N\setminus \spt(\mu)$, hence in particular we have
\begin{equation}\label{tttsub}
f''(t) + \frac{N-1}t\, f'(t)\geq 0 \qquad \hbox{in $(a,b)$}\,.
\end{equation}
We subdivide our proof in few steps. In the first one, we show that $f$ cannot be flat close to both $a$ and $b$, and in the following steps we reach a contradiction in each of the possible cases.

\step{I}{There is some $\eps>0$ such that either $f' > \eps$ in $(b-\eps,b)$, or $f'<-\eps$ in $(a,a+\eps)$.}
First of all, we want to show the existence of a small $\eps>0$ such that either $f'>\eps$ in $(b-\eps,b)$ or $f'<-\eps$ in $(a,a+\eps)$. Since by construction $b>0$, it is clear by~(\ref{tttsub}) that, if $f'(t)> 0$ for some $t<b$ close enough to $b$, then the value of $f'$ is at least $f'(t)/2$ in the whole interval $(t,b)$, and then we have already concluded this step. On the other hand, let us assume that $f'(t)\leq 0$ for every $t<b$ close enough to $b$. Since by construction the sphere $\partial B(0,b)$ belongs to $\spt(\mu)$, then $\psi_\mu$ is \emph{strictly} subharmonic in the annulus $\{x\in\R^N,\, b-\eta< |x|<b\}$ for $\eta\ll 1$, and this means that $f'(t)<0$ for some $t<b$ close to $b$. But then, (\ref{tttsub}) implies that $f'(s)<f'(t)$ for every $a<s<t$, and then the step is concluded.

\step{II}{Proof if $f'<-\eps$ in $(a,a+\eps)$ and $a=0$.}
We first assume that $f'<-\eps$ in $(a,a+\eps)$ for some small $\eps$. As a consequence, we can deduce that the sphere $\partial B(0,a)$ belongs to $\spt(\mu)$, but only if $a>0$. Let us instead suppose in this step that $a=0$. The fact that $f'<-\eps$ in a right neighborhood of $0$ implies that $\psi_\mu$ is not regular at the origin, having a cusp point. However, by construction $\psi_\mu$ is regular in $\R^N\setminus \spt(\mu)$, and then we deduce that the origin belongs to $\spt(\mu)$. Since the annulus $\{x\in\R^N,\, 0<|x|<b\}$ does not intersect $\spt(\mu)$, this means that the origin is an isolated point of $\spt(\mu)$. But since $\mu$ minimizes the energy, so in particular $\E(\mu)<+\infty$, the presence of an isolated point is only possible if $\bar g(0)<+\infty$. And finally, if $\bar g(0)$ is finite, then $\psi_\mu$ is clearly continuous, and we find a contradiction because we should have $\lim_{t\searrow 0} f(t) > \E(\mu)$ since $f$ is strictly decreasing in a right neighborhood of $0$ and $f\geq \E(\mu)$ a.e. in $(a,b)$ by Proposition~\ref{lowerlevel}. And again by Proposition~\ref{lowerlevel}, we have $f(0)=\E(\mu)$, obtaining the searched contradiction.

\step{III}{Proof if $f'<-\eps$ in $(a,a+\eps)$ and $a>0$.}
We now assume again that $f'<-\eps$ in $(a,a+\eps)$, but $a>0$. As already noticed before, this implies that $\partial B(0,a)\subseteq \spt(\mu)$, and by Proposition~\ref{lowerlevel} and the lower semicontinuity of $\psi_\mu$ we deduce that $f(a)\leq \E(\mu)$. On the other hand, $\lim_{t\searrow a} f(t)> \E(\mu)$, and then $f$ has a jump point at $a$, with $f(a) < \lim_{t\searrow a} f(t)$. We can easily show that this is impossible. Indeed, the discontinuity of $\psi_\mu$ implies that $\bar g$ is not bounded around the origin, and since $\bar g$ is subharmonic this implies that $\bar g$ is a radial, decreasing function in a neighborhood of the origin. In other words, calling $g:\R\to\R^+$ the function such that $\bar g(x)=g(|x|)$, up to possibly decrease the value of $\eps$ we have that $g$ is strictly decreasing in $(0,2\eps)$ and $\lim_{t\searrow 0} g(t)=+\infty$.\par

Let us now call $\bar x=a {\rm e}_1$, and $w=(a+\delta){\rm e}_1$ for some $\delta\ll \eps$. Since as noticed before $\lim_{t\searrow a} f(t)> \E(\mu)\geq f(a)$, up to taking $\delta$ small enough we have that $\psi_\mu(w)\geq \psi_\mu(\bar x) + J$ for some $J>0$. Let us also write $\psi_\mu=\psi_1+\psi_2$, where
\begin{align*}
\psi_1(x) = \int_{y \in B(\bar x,\eps)} \bar g(y-x)\, d\mu(y)\,, &&
\psi_2(x) = \int_{y \in \R^N\setminus B(\bar x,\eps)} \bar g(y-x) \,d\mu(y)\,.
\end{align*}
Since the function $\psi_2$ is clearly continuous in a small neighborhood of $\bar x$, up to further decreasing $\delta$ we must have
\begin{equation}\label{wmh}
\psi_1(w) \geq \psi_1(\bar x) + \frac J2\,.
\end{equation}
And finally, we find the contradiction since as already noticed $g$ must be strictly decreasing in $(0,2\eps)$, and since by construction $\mu$-a.e. $y\in B(\bar x,\eps)$ satisfies $|y-w|\geq |y-\bar x|$ then
\[
\psi_1(\bar x) = \int_{B(\bar x,\eps)} \bar g(y-\bar x)\,d\mu(y)
= \int_{B(\bar x,\eps)} g(|y-\bar x|)\,d\mu(y)
\geq \int_{B(\bar x,\eps)} g(|y-w|)\,d\mu(y) = \psi_1(w)\,,
\]
against~(\ref{wmh}).

\step{IV}{Proof if $f'>\eps$ in $(b-\eps,b)$.}
We are left with the last possible case to exclude, namely, that $f'>\eps$ in $(b-\eps,b)$. Our argument will be similar to the one of Step~III, we just need this time a little more care to deal with the geometry.\par

As in the previous case, we have a jump discontinuity at $b$, since $f(b)\leq \E(\mu)$ by lower semicontinuity of $\psi_\mu$ and Proposition~\ref{lowerlevel}, while $J:=\lim_{t\nearrow b} f(t)-\E(\mu)>0$ by Proposition~\ref{lowerlevel} and by assumption. Let now $\ell\ll b-a$ be a positive quantity, to be specified in a moment. This time, we write $\psi_\mu=\psi_1+\psi_2$ with
\begin{align*}
\psi_1(x) = \int_{y \in B(0,b+\ell)\cap B(x,2C_N\ell)} \bar g(y-x)\, d\mu(y) \,, &&
\psi_2(x) = \psi_\mu(x) - \psi_1(x)\,,
\end{align*}
where $C_N$ is the constant of Lemma~\ref{elgele}. The value of $\ell$ is so small that
\begin{align}\label{ssell}
\ell < \frac b{2C_N^2}\,, && \psi_1(\bar x) < \frac J 6\,.
\end{align}
It is again clear by construction that $\psi_2$ is continuous in a neighborhood of $\bar x=b{\rm e}_1$. As a consequence, up to further decreasing $\eps\ll \ell$ and calling this time $w=(b-\eps){\rm e}_1$, we have again~(\ref{wmh}). We claim now that, for any $b\leq r \leq b+\ell$, we have
\begin{equation}\label{sfat}
\int_{y \in \partial B(0,r)\cap B(\bar x,2C_N\ell)} \bar g(y-\bar x)\, d\H^{N-1}(y) \geq \frac 14\,\int_{y \in \partial B(0,r)\cap B(w,2C_N\ell)} \bar g(y-w)\, d\H^{N-1}(y)\,.
\end{equation}
Since $\mu$ is radial, by integration this will give $\psi_1(w)\leq 4\psi_1(\bar x)$, and this provides us with the searched contradiction thanks to~(\ref{ssell}) and~(\ref{wmh}). Therefore, to conclude we only have to establish~(\ref{sfat}).\par

Let us then fix $b\leq r\leq b+\ell$, and let us call $\xi=r-b+\eps\leq \ell + \eps < 2\ell$, which is the distance between $w$ and $\partial B(0,r)$. It is immediate to observe that, since $\ell\ll 1$, for any $y\in\spt\mu$ which belongs to the ball $B(\bar x,3C_N\ell)$ (which contains $B(w,2C_N\ell)$ since $\eps\ll \ell$), the implication
\[
|y-w| \leq C_N \xi \quad\Longrightarrow \quad |y-\bar x|\leq |y-w|
\]
holds. As a consequence, for any such $y$ we have $\bar g(y-\bar x)\geq \bar g(y-w)$ --indeed, as before we have that $\bar g$ is a radially strictly decreasing in a neighborhood of the origin, because otherwise $\psi_\mu$ could not be discontinuous. We deduce
\[\begin{split}
\int_{y \in \partial B(0,r)\cap B(w,C_N\xi)} \bar g(y-w)\, d\H^{N-1}(y) &\leq
\int_{y \in \partial B(0,r)\cap B(w,C_N\xi)} \bar g(y-\bar x)\, d\H^{N-1}(y)\\
&\leq \int_{y \in \partial B(0,r)\cap B(\bar x,C_N\xi)} \bar g(y-\bar x)\, d\H^{N-1}(y)\,.
\end{split}\]
Consequently, to conclude the validity of~(\ref{sfat}) we can limit ourselves to show
\[\begin{split}
\int_{\partial B(0,r)\cap B(\bar x,2C_N\ell)\setminus B(\bar x, C_N\xi)} \hspace{-20pt}&\hspace{20pt}\bar g(y-\bar x)\, d\H^{N-1}(y) \\
&\geq \frac 14\,\int_{\partial B(0,r)\cap B(w,2C_N\ell)\setminus B(w, C_N\xi)} \bar g(y-w)\, d\H^{N-1}(y)\,.
\end{split}\]
And in turn, this is clearly true if for any $C_N\xi<d<2C_N\ell$ and for any $\delta\ll \eps$ we have
\begin{equation}\label{conclcaraz}
\H^{N-1} \Big(\partial B(0,r) \cap B(\bar x,d+\delta)\setminus B(\bar x, d)\Big)
\geq \frac 14\,\H^{N-1} \Big(\partial B(0,r) \cap B(w,d+\delta)\setminus B(w, d)\Big)\,.
\end{equation}
Finally, this last inequality is a consequence of Lemma~\ref{elgele}. Indeed, take any $C_N\xi<d<2C_N\ell$, and call $\eta'=\xi$ and $\eta''=r-b$. By construction, $d > C_N \eta'> C_N \eta''$, and $r>C_N d$ by~(\ref{ssell}). Therefore, for any $\delta\ll 1$ we can apply Lemma~\ref{elgele} with constants $r,\, d,\, \eta',\, \delta$ as well as with constants $r,\,d,\,\eta'',\,\delta$, and then~(\ref{claimelgele}) gives~(\ref{conclcaraz}). As noticed before, this establishes~(\ref{sfat}) and then the proof is concluded.
\end{proof}

We conclude this section by observing an important consequence of the convexity of the support of an optimal measure, that is, the potential is continuous.

\begin{lemma}[Continuity of $\psi_\mu$]\label{psimucont}
Let $\bar g:\R^N\to\R$ be a radial, l.s.c., $L^1_{\rm loc}$ function such that, calling $g(|v|)=\bar g(v)$, the fuction $g$ is continuous in $(0,+\infty)$ and decreasing in a right neighborhood of $0$. Let also $\mu$ be an optimal measure, either in $\P$ or in $\P_{\rm rad}$, with support bounded and convex. Then the function $\psi_\mu$ is continuous. More precisely, there exists a continuous function $\tilde\psi:\R^N\to\R$ such that the set $\{\psi_\mu\neq \tilde\psi\}$ is negligible with respect to both $\mu$ and $\L^N$.
\end{lemma}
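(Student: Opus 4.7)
The plan is to exhibit the continuous representative $\tilde\psi(x):=\max\{\psi_\mu(x),\E(\mu)\}$. The discrepancy set is then
\[
\{\psi_\mu\neq\tilde\psi\}=\{\psi_\mu<\E(\mu)\},
\]
and by Proposition~\ref{lowerlevel} this set is simultaneously $\mu$-negligible (since $\psi_\mu=\E(\mu)$ $\mu$-a.e. on $\spt\mu$, and $\mu$ is concentrated on $\spt\mu$) and $\L^N$-negligible (since $\psi_\mu\geq\E(\mu)$ $\L^N$-a.e.). So the entire content of the lemma is the continuity of $\tilde\psi$, and since $\tilde\psi$ is l.s.c. (as a maximum of two l.s.c. functions), I only need upper semicontinuity.

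Let $x_0\in\R^N$ and split according to position. \emph{Case (i): $x_0\notin\spt\mu$.} Here, for every $y\in\spt\mu$ and every $x$ in a small neighborhood of $x_0$, $|y-x|$ stays in a compact subset of $(0,+\infty)$, where $g$ is continuous; a routine dominated convergence then gives continuity of $\psi_\mu$ at $x_0$, and hence of $\tilde\psi$. \emph{Case (ii): $x_0\in\mathrm{int}(\spt\mu)$.} In a neighborhood of $x_0$ every point lies in $\spt\mu$, where Remark~\ref{forevery} gives $\psi_\mu\leq\E(\mu)$; thus $\tilde\psi\equiv\E(\mu)$ locally.

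\emph{Case (iii): $x_0\in\partial\spt\mu$.} This is the main case. The value is $\tilde\psi(x_0)=\E(\mu)$, and I must show $\limsup_{x\to x_0}\psi_\mu(x)\leq\E(\mu)$. For $x\in\spt\mu$ this is immediate from Remark~\ref{forevery}, so fix $x_n\to x_0$ with $x_n\notin\spt\mu$, and let $\pi(x_n)\in\spt\mu$ denote the metric projection onto the closed convex set $\spt\mu$. The convexity of $\spt\mu$ yields the crucial inequality $|y-x_n|\geq |y-\pi(x_n)|$ for every $y\in\spt\mu$. Now decompose $\mu=\mu_1+\mu_2$ with $\mu_1:=\mu\res B(x_0,r/4)$ (where $r$ is the radius appearing in \Hz); for $x$ close enough to $x_0$, the potential $\psi_{\mu_2}$ is continuous near $x_0$ (same argument as in Case (i), since $y$ is kept at distance $\gtrsim r/4$ from $x$). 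For $\psi_{\mu_1}$, provided $|x_n-x_0|$ is small both $|y-x_n|$ and $|y-\pi(x_n)|$ lie in $(0,r)$, where $g$ is decreasing; hence $\bar g(y-x_n)\leq \bar g(y-\pi(x_n))$ pointwise, and integration gives $\psi_{\mu_1}(x_n)\leq\psi_{\mu_1}(\pi(x_n))$. Combining,
\[
\psi_\mu(x_n)\leq \psi_\mu(\pi(x_n))+\bigl[\psi_{\mu_2}(x_n)-\psi_{\mu_2}(\pi(x_n))\bigr]\leq \E(\mu)+\bigl[\psi_{\mu_2}(x_n)-\psi_{\mu_2}(\pi(x_n))\bigr],
\]
where I used $\pi(x_n)\in\spt\mu$ and Remark~\ref{forevery}. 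Since $\pi(x_n)\to x_0$ and $\psi_{\mu_2}$ is continuous at $x_0$, the bracket vanishes, giving the required upper bound.

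The main obstacle is precisely Case (iii): ruling out that $\psi_\mu$ jumps up when one enters the complement of $\spt\mu$ transversally. The convexity of the support is indispensable, being used exactly to produce a contraction $\pi$ that makes every point of $\spt\mu$ strictly closer to $\pi(x)$ than to $x$; the local monotonicity of $g$ near $0$ in \Hz\ then converts this distance-decrease into an integrand-increase, so that only the ``far-field'' contribution $\psi_{\mu_2}$ remains, which is handled by continuity away from the origin.
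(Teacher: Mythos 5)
Your proof is correct and takes essentially the same route as the paper's: both single out $\tilde\psi$ (your $\max\{\psi_\mu,\E(\mu)\}$ agrees with the paper's piecewise definition by Remark~\ref{forevery}), both exploit that the metric projection onto the convex support shrinks the distance to every point of the support, and both split the potential into a near-field piece where monotonicity of $g$ near $0$ turns the distance decrease into an integrand increase and a far-field piece controlled by continuity of $g$ away from the origin. The only cosmetic differences are that the paper phrases the upper bound as an explicit modulus-of-continuity estimate $\psi_\mu(y)-\psi_\mu(\pi(y))\leq\omega(|y-\pi(y)|)$ rather than via sequences, and it partitions the integral by the ball $B(y,2r)$ rather than fixing $\mu_1=\mu\res B(x_0,r/4)$; also note $r$ in the lemma's hypotheses is just the width of the interval on which $g$ is decreasing, not necessarily the \Hz\ radius.
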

\begin{proof}
Calling for brevity $\Gamma=\spt(\mu)$, we simply define $\tilde \psi$ as the function which equals $\psi_\mu$ on $\R^N\setminus\Gamma$ and $\E(\mu)$ on $\Gamma$. The set where $\tilde\psi\neq \psi_\mu$ is the set of the points in $\Gamma$ where $\psi_\mu\neq \E(\mu)$, and this set is both $\mu$- and $\L^N$-negligible by Proposition~\ref{lowerlevel} and Remark~\ref{forevery}. As a consequence, we only have to show that $\tilde\psi$ is continuous.\par

Since in the open set $\R^N\setminus\Gamma$ we have that $\tilde\psi=\psi_\mu$ is continuous by construction, all we have to do is to show the continuity of $\tilde\psi$ at points of $\partial\Gamma$. Let us call $0<r<R$ two constants such that $g$ is decreasing in $(0,2r)$ and the diameter of $\Gamma$ is less than $R-r$, and let $\omega$ be the modulus of continuity of $g$ in the closed interval $[r,R]$. Let $y\notin \spt(\mu)$ be any point with ${\rm dist}(y,\Gamma)<r$, and let $x\in\Gamma$ be the point which minimizes the distance from $y$. We claim that
\begin{equation}\label{psicont}
\psi_\mu(y)-\psi_\mu(x) \leq \omega(|y-x|)\,,
\end{equation}
which will clearly conclude the thesis. By minimality of $x$, for every $z\in\Gamma$ we have $|z-y|\geq |z-x|$, thus $g(|z-y|)\leq g(|z-x|)$ if $|z-y|\leq 2r$. If, instead, $z\in \Gamma$ but $|z-y|>2r$, then we have also $|z-x|>r$, and then $g(|z-y|)-g(|z-x|) \leq \omega(|y-x|)$. As a consequence,
\[\begin{split}
\psi_\mu(y)-\psi_\mu(x) &= \int_{B(y,2r)} g(|z-y|)-g(|z-x|)\, d\mu(z) + \int_{\R^N\setminus B(y,2r)} g(|z-y|)-g(|z-x|)\, d\mu(z)\\
&\leq \omega(|y-x|) \mu\big(\R^N\setminus B(y,2r)\big)\leq \omega(|y-x|)\,,
\end{split}\]
which proves~(\ref{psicont}) and thus the thesis.
\end{proof}

\subsection{The main geometric estimates\label{ssect:main}}

This section is devoted to show three geometric estimates, that we will use to get the proof of Theorem~\mref{Linfty}. We start with an elementary calculation.
\begin{lemma}\label{Newteclem}
Let $\bar g$ be a function satisfying condition \Hz, let $r>0$ be given by \Hz, and let $\tilde r\leq r$. There exists $c=c(g,N,\tilde r)>0$ such that, defining $\tilde f:\R^N\to\R^+$ as $\tilde f(x) =1$ if $|x|<\tilde r$ and $\tilde f(x)=0$ otherwise, one has $\psi_{\tilde f}(z)\leq \psi_{\tilde f}(0) - c|z|^2$ for every $z\in\R^N$ with $|z| \ll 1$, depending on $g$ and $N$. The constant $c$ actually depends only on $N,\, \tilde r$ and $g'(\tilde r)$.
\end{lemma}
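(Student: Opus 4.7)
The plan is to exploit the radial symmetry of the problem and reduce to a one-variable analysis. Since both $\bar g$ and $\tilde f$ are radial, so is $\psi_{\tilde f}$, and it suffices to prove the estimate along a single ray $z=t{\rm e}_1$, that is, to show $u(t):=\psi_{\tilde f}(t{\rm e}_1)\le u(0)-ct^2$ for $0<t\ll 1$. I will use crucially that $g'(\tilde r)<0$: this is forced by assumption~\Hz, since the monotonicity of $g'$ together with $g'\le 0$ on $(0,r)$ (from $g$ being decreasing there) would, in case $g'(\tilde r)=0$, imply $g'\equiv 0$ on $[\tilde r,r)$, contradicting the strict decrease of $g$.

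The first step is to rewrite $u(t)-u(0)$ as an integral over a single crescent. The change of variable $v=y-t{\rm e}_1$ gives $u(t)=\int_{B(-t{\rm e}_1,\tilde r)}g(|v|)\,dv$, and comparing with $u(0)=\int_{B_{\tilde r}}g(|v|)\,dv$ produces two crescents $A=B(-t{\rm e}_1,\tilde r)\setminus B_{\tilde r}$ and $C=B_{\tilde r}\setminus B(-t{\rm e}_1,\tilde r)$. The reflection-translation $v\mapsto -v-t{\rm e}_1$ is an isometry sending $A$ onto $C$ and turning $g(|v|)$ into $g(|v+t{\rm e}_1|)$, hence
\[
u(t)-u(0)=\int_C \bigl[g(|v+t{\rm e}_1|)-g(|v|)\bigr]\,dv.
\]
On $C$ one has $|v|<\tilde r\le|v+t{\rm e}_1|$, and by the triangle inequality both quantities lie in $[\tilde r-t,\tilde r+t]$; since $g\in{\rm C}^2$ near $\tilde r$ with $g'(\tilde r)<0$, for $t$ small enough (depending on $g$, $\tilde r$, $N$) continuity of $g'$ gives $g'(\xi)\le g'(\tilde r)/2$ on that interval, and the mean value theorem yields the pointwise bound $g(|v+t{\rm e}_1|)-g(|v|)\le \frac{g'(\tilde r)}{2}(|v+t{\rm e}_1|-|v|)$.

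It remains to bound from below the integral $\int_C(|v+t{\rm e}_1|-|v|)\,dv$ by $c_0(N,\tilde r)\,t^2$; this is the main technical step, and I expect it to be the only real obstacle. Writing $v=(\tilde r-s)\omega$ with $\omega\in \mathbb{S}^{N-1}$ and using the identity $|v+t{\rm e}_1|^2-|v|^2=2t(\tilde r-s)\omega_1+t^2$, a direct check shows that for $t\le \tilde r/2$ the subregion $\{\omega_1\ge 1/2,\ 0\le s\le t/4\}$ is contained in $C$, that $|v+t{\rm e}_1|-|v|\ge t/6$ holds on it, and that its Lebesgue measure is bounded below by a constant depending only on $N$ times $t\,\tilde r^{N-1}$. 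Restricting the integral to this ``polar cap'' yields the desired $c_0 t^2$ lower bound with $c_0=c_0(N,\tilde r)>0$, avoiding any fine analysis of the full shape of the crescent. Combining the two estimates,
\[
u(t)-u(0)\le \frac{g'(\tilde r)}{2}\,c_0(N,\tilde r)\,t^2 = -c\,t^2,
\]
with $c=c(N,\tilde r,g'(\tilde r))>0$, as claimed.
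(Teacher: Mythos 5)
Your proof is correct, and it takes a genuinely different route from the paper's. The paper writes $\psi_{\tilde f}(z)-\psi_{\tilde f}(0)$ as an integral over $w\in\partial B_{\tilde r}$ of line integrals of $\bar g$ along the segments $[w,w+z]$, with a Jacobian factor $\cos\theta(w)$; it then Taylor-expands $g$ around $\tilde r$ along each segment, notes that the zeroth-order $g(\tilde r)$ term integrates to zero by symmetry, and is left with $\tfrac{\eta^2}{2}\,g'(\tilde r)\int_{\partial B_{\tilde r}}\cos^2\theta\,d\H^{N-1}+o(\eta^2)$, which gives $c=|g'(\tilde r)|C_N\tilde r^{N-1}/3$. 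You instead use the affine isometry $v\mapsto -v-t{\rm e}_1$ to collapse the difference of the two ball-integrals into a single integral over the crescent $C=B_{\tilde r}\setminus B(-t{\rm e}_1,\tilde r)$, apply the mean value theorem together with continuity of $g'$ to get the pointwise bound $g(|v+t{\rm e}_1|)-g(|v|)\le\tfrac{g'(\tilde r)}{2}\,(|v+t{\rm e}_1|-|v|)$ on $C$ for $t$ small, and then bound $\int_C(|v+t{\rm e}_1|-|v|)\,dv$ from below by restricting to an explicit polar cap $\{\omega_1\ge 1/2,\ 0\le s\le t/4\}$ whose inclusion in $C$, pointwise lower bound $\ge t/6$, and measure $\gtrsim_N t\,\tilde r^{N-1}$ I have checked. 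Your version is fully quantitative and sidesteps the paper's informal $o(\eta^2)$ bookkeeping, at the cost of a slightly longer elementary computation; the paper's version is shorter and yields a tidier closed-form constant. Both arguments hinge on $g'(\tilde r)<0$; you deduce this from~\Hz\ by invoking the ``strict decrease of $g$'', but~\Hz\ as written only requires $g$ to be decreasing, not strictly. This small imprecision is shared by the paper's own proof, which asserts $g'(\tilde r)<0$ without comment, so it does not count against your argument relative to theirs --- though strictly speaking, if $g$ were locally constant near $\tilde r$ the lemma with $c>0$ would fail and neither proof covers that degenerate case.
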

\begin{proof}
Let $z\in\R^N$ be a point with $\eta=|z|$ sufficiently small. For every $w\in \partial B(0,\tilde r)$, call $\Gamma(w)$ the segment joining $w$ and $w+z$, and $\theta=\theta(w)\in\S^1$ the angle between $w$ and $z$, that is, $w\cdot z = |w| |z| \cos\theta = \tilde r\eta \cos\theta$. We can then evaluate
\[\begin{split}
\psi_{\tilde f}(z) - \psi_{\tilde f}(0) &= \int_{B(z,\tilde r)} \bar g(y)\, dy- \int_{B(0,\tilde r)} \bar g(y)\, dy\\
&= \int_{w\in \partial B(0,\tilde r)} \int_{x\in\Gamma(w)} \bar g(x) \,d\H^1(x) \cos\theta\,d\H^{N-1}(w)\\
&= \int_{w\in \partial B(0,\tilde r)} \int_{t=0}^\eta \Big(g(\tilde r) + t\cos\theta g'(\tilde r) + o(\eta)\Big) \,dt \cos\theta\,d\H^{N-1}(w)\\
&= \frac{\eta^2}2 \, g'(\tilde r) \int_{w\in \partial B(0,\tilde r)} \cos^2\theta \,d\H^{N-1}(w)+ o(\eta^2)
= \frac{\eta^2}2 \, g'(\tilde r) C_N \tilde r^{N-1}+ o(\eta^2)\,.
\end{split}\]
Notice that $C_N$ is a purely geometrical constant, only depending on $N$. Its exact value, though elementary to calculate, is not important. Here, by $o(\eta)$ and $o(\eta^2)$ we denote a quantity which becomes arbitrarily smaller than $\eta$, or $\eta^2$, if $\eta$ is small enough, depending on $g,\, N$ and $\tilde r$. Since $g'(\tilde r)<0$, we obtain the thesis with $c=|g'(\tilde r)|C_N \tilde r^{N-1}/3$.
\end{proof}

We now pass to give an $L^\infty$ estimate in a very peculiar case. We will obtain the proof of Theorem~\mref{Linfty} basically reducing ourselves to this case.

\begin{lemma}[$L^\infty$ estimate]\label{mainlemma}
Let us assume that $\bar g$ satisfies \Hz, and let $f:\R^N\to\R$ be a positive, radial, ${\rm C}^2$ function, with unit $L^1$ norm, concentrated in $B_{\overline R}$ for some $\overline R>0$ and such that $\psi_f$ is constant in a neighborhood of $0$ and $f(0)=\max \{f(x)\}$. Then $f(0)\leq M_0$ for some constant $M_0=M_0(g,N,\overline R)$, which actually only depends on $N,\,\overline R$ and on the restriction of $g$ to $[r,\overline R]$.
\end{lemma}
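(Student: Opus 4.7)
The plan is to combine the hypothesis that $\psi_f$ is locally constant at $0$ with the quantitative concavity provided by Lemma~\ref{Newteclem}. I would fix a radius $\tilde r\le r$ (ultimately close to $r$, so that the constant $c=c(g,N,\tilde r)$ in Lemma~\ref{Newteclem} depends only on the restriction of $g$ to $[r,\overline R]$) and split
\[
f=f(0)\mathbf{1}_{B_{\tilde r}}+\phi,\qquad \phi:=f-f(0)\mathbf{1}_{B_{\tilde r}},
\]
so that $\phi\le 0$ on $B_{\tilde r}$ (since $f(0)=\max f$) while $\phi=f\ge 0$ outside. Correspondingly $\psi_f=f(0)\,\psi_{\mathbf{1}_{B_{\tilde r}}}+\psi_\phi$, and for any $z$ small enough that $\psi_f(z)=\psi_f(0)$ one has
\[
0=f(0)\bigl(\psi_{\mathbf{1}_{B_{\tilde r}}}(z)-\psi_{\mathbf{1}_{B_{\tilde r}}}(0)\bigr)+\bigl(\psi_\phi(z)-\psi_\phi(0)\bigr).
\]
Since Lemma~\ref{Newteclem} forces $\psi_{\mathbf{1}_{B_{\tilde r}}}(z)-\psi_{\mathbf{1}_{B_{\tilde r}}}(0)\le -c|z|^2$, this yields the lower bound $\psi_\phi(z)-\psi_\phi(0)\ge c\,f(0)\,|z|^2$.

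The crux of the argument is then to establish a matching upper bound $\psi_\phi(z)-\psi_\phi(0)\le C|z|^2$ with $C=C(N,\overline R,g|_{[r,\overline R]})$: combined with the lower bound this immediately produces $f(0)\le C/c=:M_0$. I would split $\psi_\phi(z)-\psi_\phi(0)=I_{\rm in}+I_{\rm out}$ into the contributions from $B_{\tilde r}$ and $\R^N\setminus B_{\tilde r}$. The outer term $I_{\rm out}$ is controlled by a second--order Taylor expansion of $\bar g$ on the region where it is $C^2$: using the radial symmetry of $f$ to cancel the linear term (the integral $\int_{\R^N\setminus B_{\tilde r}} f\nabla\bar g$ vanishes by symmetry) and $\|f\|_{L^1}=1$, one obtains $|I_{\rm out}|\le C_1|z|^2+o(|z|^2)$ with $C_1$ depending only on $N,\overline R$ and on the $C^2$--norm of $\bar g$ in the relevant annulus.

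The delicate step is the upper bound on the inner integral
\[
I_{\rm in}(z)=\int_{B_{\tilde r}}\bigl(\bar g(y-z)-\bar g(y)\bigr)\bigl(f(y)-f(0)\bigr)\,dy,
\]
because of the singularity of $\bar g$ at the origin. I would handle it by symmetrising $z$: averaging over $z\in\partial B_\eta$, the identity $\psi_f(z)=\psi_f(0)$ becomes
\[
\int f(y)\bigl(M_\eta(y)-\bar g(y)\bigr)\,dy=0,
\]
where $M_\eta(y)$ is the spherical mean of $\bar g$ on $\partial B(y,\eta)$. Subharmonicity of $\bar g$ in $B_r\setminus\{0\}$ gives $M_\eta-\bar g\ge 0$ on the annulus $\{\eta<|y|<r-\eta\}$, while the $C^2$ control of $\bar g$ on $\{|y|\ge r-\eta\}$ yields $|M_\eta-\bar g|=O(\eta^2)$ there; together these facts transfer the (a priori large) negative contribution from $B_\eta$ into an $O(\eta^2)$ error whose constants depend only on $g|_{[r,\overline R]}$, furnishing the needed upper bound on $I_{\rm in}$.

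The main obstacle is precisely this last step: the factor $f(y)-f(0)$ has the correct sign on $B_{\tilde r}$ but no a priori quantitative smallness estimate independent of the (unknown) $C^2$--norm of $f$, so one cannot naively pair it against $\bar g(y-z)-\bar g(y)$ near the singularity. The averaging/subharmonicity device is what allows one to bypass the singularity and to keep the final constants dependent only on the behaviour of $g$ away from $0$, which is what matches the dependency claimed in the statement.
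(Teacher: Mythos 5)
Your overall architecture actually tracks the paper's quite closely: the split $f=f(0)\mathbf{1}_{B_{\tilde r}}+\phi$ is exactly the paper's decomposition of $\psi_1$ into the Lemma~\ref{Newteclem} piece and the remainder $\int_{B_r}(\bar g(y-x)-\bar g(y))(f(y)-f(0))\,dy$, and your treatment of $I_{\rm out}$ is the paper's estimate of $D^2\psi_2(0)$. The sign information on the annulus $\{\eta<|y|<r-\eta\}$, which you obtain by averaging $\psi_f(z)=\psi_f(0)$ over $z\in\partial B_\eta$ and invoking the sub--mean-value inequality $M_\eta\geq\bar g$, is a legitimate variant of what the paper does (the paper instead foliates the $y$--integral by spheres $\partial B_s$ and uses that $z\mapsto\int_{\partial B_s}\bar g(z-y)\,d\H^{N-1}(y)$ is subharmonic and radial, hence minimized at $z=0$). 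Up to there the two arguments are essentially equivalent.

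The gap is in the final step, where you assert that the averaging ``transfers the (a priori large) negative contribution from $B_\eta$ into an $O(\eta^2)$ error'' and thereby lets you avoid any dependence on the ${\rm C}^2$--norm of $f$. This does not work. The averaged identity controls $\int f\,(M_\eta-\bar g)$, whereas the quantity you must bound from above is $\int\phi\,(M_\eta-\bar g)$; the two differ by $-f(0)\int_{B_{\tilde r}}(M_\eta-\bar g)$, which by (the averaged form of) Lemma~\ref{Newteclem} is $\approx c\,f(0)\,\eta^2$ --- i.e.\ precisely the quantity whose size you are trying to establish, so using the identity here is circular. Concretely, on $B_\eta$ one has $\phi\le 0$ and, for $|y|<\eta/2$, also $M_\eta(y)-\bar g(y)<0$ (since $g$ is decreasing near $0$ and $|y-z|\ge\eta-|y|>|y|$ for $z\in\partial B_\eta$), so the integrand $\phi(M_\eta-\bar g)$ is \emph{nonnegative} there and the identity together with the sign conditions on the annulus provides no useful upper bound on $\int_{B_\eta}\phi(M_\eta-\bar g)$. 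The crude pointwise bound $|\phi|\le f(0)$ only gives $\int_{B_\eta}\phi(M_\eta-\bar g)\le 2f(0)\,\|\bar g\|_{L^1(B_{2\eta})}$, and in the regime where \Hz\ holds (e.g.\ $g(t)=t^{-\alpha}$ with $\alpha\ge N-2$) one has $\|\bar g\|_{L^1(B_{2\eta})}\gg\eta^2$, so this is of no help. The missing ingredient --- which the paper uses and which you explicitly try to sidestep --- is the ${\rm C}^2$ regularity of $f$ at its radial maximum: $|f(y)-f(0)|\le\frac12\|D^2f\|_{L^\infty}|y|^2$ on $B_\eta$, which produces the extra small factor $\|\bar g\|_{L^1(B_{2\eta})}$ in front of $\eta^2$ and lets one pass $\eta\to0$; the ${\rm C}^2$ norm of $f$ only enters the threshold for ``$\eta$ small enough,'' not the final constant $M_0$. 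As written, your plan does not close without restoring exactly this step.
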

\begin{proof}
First of all, we subdvide $\psi_f = \psi_1+\psi_2$, where
\begin{align*}
\psi_1(x) &= \int_{B_r} \bar g(y-x) f(y)\,dy \,, & \psi_2(x) = \int_{\R^N\setminus B_r} \bar g(y-x) f(y)\,dy \,.
\end{align*}
We start considering the function $\psi_2$, which is easier to deal with around $0$. In fact, of course $\psi_2$ is radial and of class ${\rm C}^2$ in $B_r$, so in particular $D\psi_2(0)=0$. Moreover, also keeping in mind that $\|f\|_{L^1}=1$, we have that
\begin{equation}\label{estipsi2}
\big| D^2 \psi_2(0)\big| = \bigg| \int_{\R^N\setminus B_r} D^2  \bar g(y-x) f(y)\,dy \bigg| \leq C(g,N,\overline R)\,.
\end{equation}
Notice that the constant $C$ actually only depends on $N$ and on $\max\limits_{t\in [r,\overline R]} \big\{|g'(t)|+|g''(t)|\big\}$.\par

We now pass to consider $\psi_1$. Notice that, for every $x\in B_r$, we have
\[\begin{split}
\psi_1(x)-\psi_1(0) &= \int_{B_r} \big(\bar g(y-x) - \bar g(y) \big) f(y)\,dy\\
&= \int_{B_r} \big(\bar g(y-x) - \bar g(y) \big) \big(f(y)-f(0)\big)\,dy + f(0) \int_{B_r} \left(\bar g(y-x) - \bar g(y)\right) \,dy\,.
\end{split}\]
The last integral in the above equation is nothing else than $\psi_{\tilde f}(x)-\psi_{\tilde f}(0)$ if we call $\tilde f$ the characteristic function of the ball $B_r$. We can then apply Lemma~\ref{Newteclem} and deduce from the above equation that
\begin{equation}\label{easyest}
\psi_1(x)-\psi_1(0) \leq -c f(0) |x|^2 + \int_{B_r} \big(\bar g(y-x) - \bar g(y) \big) \big(f(y)-f(0)\big)\,dy\,,
\end{equation}
where $c=c(g,N)$ is the constant given by Lemma~\ref{Newteclem} --notice that $c$ depends on $N$, $r$ and $g'(r)$, and in turn $r$ depends only on $g$ by assumption \Hz. We can now subdivide the last integral in two parts, namely, the integral in the smaller ball $B_{|x|}$, and the integral in $B_r\setminus B_{|x|}$. Since $f$ is radial and of class ${\rm C}^2$, we have that $|f(y)-f(0)|\leq \|D^2 f\|_{L^\infty} |x|^2$ for every $y\in B_{|x|}$, thus since $\bar g$ is integrable we deduce
\begin{equation}\label{smallball}\begin{split}
\bigg| \int_{B_{|x|}} &\big(\bar g(y-x) - \bar g(y) \big) \big(f(y)-f(0)\big)\,dy \bigg| \\
&\leq \|D^2 f\|_{L^\infty} |x|^2 \int_{B_{|x|}} \big|\bar g(y-x) - \bar g(y) \big|\, dy \leq 2 \|\bar g\|_{L^1(B_{2|x|})} \|D^2 f\|_{L^\infty} |x|^2\,.
\end{split}\end{equation}
We finally use that $\bar g$ is subharmonic in $B_{2r}\setminus \{0\}$. Indeed, this implies that for every $0<s<r$ the function
\[
z\mapsto \int_{\partial B_s} \bar g(z-y)\,d\H^{N-1}(y)
\]
is also subharmonic in $B_s$, and since this function is also radial by construction then its minimum is at $z=0$, that is,
\[
\int_{\partial B_s} \left(\bar g(z-y)-\bar g(y)\right)\,d\H^{N-1}(y) \geq 0\,.
\]
Using that fact that $0$ is a maximum point of the radial function $f$ in the ball $B_r$, by integration we immediately deduce that, again for every $x\in B_r$,
\[
\int_{B_r\setminus B_{|x|}} \big(\bar g(y-x)-\bar g(y)\big)\big(f(y)-f(0)\big)\,dy \leq 0\,.
\]
Putting this inequality together with~(\ref{smallball}) into~(\ref{easyest}), we obtain
\[
\psi_1(x)-\psi_1(0) \leq \Big(2 \|\bar g\|_{L^1(B_{2|x|})} \|D^2 f\|_{L^\infty}-c f(0)\Big) |x|^2\,.
\]
Since $\bar g\in L^1_{\rm loc}(\R^N)$, for $|x|$ small enough we deduce $\psi_1(x)-\psi_1(0) \leq -cf(0) |x|^2/2$. And finally, keeping in mind that $\psi_1+\psi_2=\psi_f$ is constant in a neighborhood of $0$ and~(\ref{estipsi2}), we deduce that $f(0)\leq M_0$ with
\[
M_0= 2 \,\frac{C(g,N,\overline R)}{Nc(g,N)}\,.
\]
The proof is then concluded. We underline that the constant $C(g,N,\overline R)$ only depends on $N$ and on the restriction of $g$ to $[r,\overline R]$, while $c(g,N)$ only depends on $N,\, r$ and $g'(r)$.
\end{proof}

\begin{remark}
Notice that the above estimate is true also if the origin is only a \emph{local} maximum of $f$. More precisely, if $0$ is a maximum of $f$ in the ball $B_{\hat r}$, then the above proof works substituting $r$ with $\tilde r:=\min\{r,\hat r\}$ (in fact, Lemma~\ref{Newteclem} is proved with $\tilde r$). Therefore, the $L^\infty$ bound in this more general case also depends on $\hat r$.
\end{remark}

We can now show an estimate on the possible mass of a small ball around the boundary of the support of an optimal measure.

\begin{lemma}[Estimate near the boundary]\label{nearbdr}
Let $\bar g$ be a function satisfying \Hz, and let $\overline\mu$ be an optimal measure (either in $\P$ or in $\P_{\rm rad}$) with support convex and contained in $B_{\overline R}$. There exists a constant $C=C(g,N,\overline R)$ such that, for every $x\in\partial\big({\spt}(\overline\mu)\big)$ and every $\rho\ll 1$, one has
\begin{equation}\label{massbndr}
\overline \mu \big( B(x,\rho) \big) \leq \frac C{|g'(2\rho)|}\,.
\end{equation}
\end{lemma}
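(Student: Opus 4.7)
The plan is to exploit the convexity of $\spt(\bar\mu)$ to produce a test point $y$ just outside the support and to compare the potential at $x$ and $y$: since $x\in\spt(\bar\mu)$ and $y\notin\spt(\bar\mu)$, Remark~\ref{forevery} forces $\psi_{\bar\mu}(y)-\psi_{\bar\mu}(x)\ge 0$, and the idea is to show that the only way this is compatible with the strongly negative contribution coming from $B(x,\rho)\cap\spt(\bar\mu)$ is that $\bar\mu(B(x,\rho))$ be small. Concretely, the convexity of $\spt(\bar\mu)$ yields, at any $x\in\partial\spt(\bar\mu)$, a unit outward normal $\nu$ with $\nu\cdot(z-x)\le 0$ for all $z\in\spt(\bar\mu)$; setting $y:=x+\rho\nu$ with $\rho\ll 1$ (in particular $\rho<r/4$ with $r$ from~\Hz) gives a point in the strict outward half-space, hence outside $\spt(\bar\mu)$, and the identity
$$|z-y|^2=|z-x|^2-2\rho\,\nu\cdot(z-x)+\rho^2\geq |z-x|^2+\rho^2$$
provides the crucial quantitative separation $|z-y|\ge\sqrt{|z-x|^2+\rho^2}\ge|z-x|$ valid throughout the support.

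I would then split
$$0\le\psi_{\bar\mu}(y)-\psi_{\bar\mu}(x)=\int_{\spt(\bar\mu)}\bigl[\bar g(z-y)-\bar g(z-x)\bigr]\,d\bar\mu(z)$$
into three regions: $B(x,\rho)$, the annulus $B(x,r/2)\setminus B(x,\rho)$, and $\spt(\bar\mu)\setminus B(x,r/2)$. On the annulus both $|z-x|$ and $|z-y|\le|z-x|+\rho$ lie in $(0,r)$, where by~\Hz $g$ is decreasing; since $|z-y|\ge|z-x|$, the integrand is $\le 0$ and is simply discarded. On the outer region both distances lie in the compact interval $[r/2,2\overline R+\rho]$ on which $g$ is $C^2$, and the mean value theorem yields $|\bar g(z-y)-\bar g(z-x)|\le \rho\cdot C(g,N,\overline R)$, so this contribution is bounded by $C\rho$ since $\bar\mu$ is a probability.

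The gain comes from the inner ball: for $z\in B(x,\rho)\cap\spt(\bar\mu)$ one has $|z-y|\le 2\rho<r$, so
$$|z-y|-|z-x|\ge\frac{\rho^2}{|z-y|+|z-x|}\ge\frac{\rho}{3},$$
and by~\Hz $|g'|$ is decreasing on $(0,r)$, hence $|g'(s)|\ge|g'(2\rho)|$ for every $s\in[|z-x|,|z-y|]\subseteq(0,2\rho]$. Therefore
$$\bar g(z-y)-\bar g(z-x)=-\int_{|z-x|}^{|z-y|}|g'(s)|\,ds\le -\frac{\rho}{3}\,|g'(2\rho)|,$$
and integrating gives a contribution at most $-\frac{\rho}{3}|g'(2\rho)|\,\bar\mu(B(x,\rho))$. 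Putting the three pieces together yields
$$0\le -\frac{\rho}{3}\,|g'(2\rho)|\,\bar\mu\bigl(B(x,\rho)\bigr)+C\rho,$$
and~(\ref{massbndr}) follows with $C=3C(g,N,\overline R)$.

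The main obstacle is not an analytic one but a geometric one: extracting the lower bound $|z-y|-|z-x|\gtrsim\rho$ uniformly on $B(x,\rho)\cap\spt(\bar\mu)$, which is where convexity of the support is really used and which provides the extra factor $\rho$ (compared with the cheap bound $\bar\mu(B(x,\rho))\lesssim 1/g(\rho)$ available from $\psi_{\bar\mu}(x)\le\E(\bar\mu)$). The remaining points—that the subharmonicity-free monotonicity of $g,g'$ on $(0,r)$ is enough to kill the annular part, and that the $C^2$ regularity away from $0$ kills the tail—are standard once the three-piece splitting is in place.
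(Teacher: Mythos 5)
Your proof is correct and follows essentially the same strategy as the paper's: place a test point $y$ just outside the convex support, use Remark~\ref{forevery} to get $\psi_{\bar\mu}(y)\ge\psi_{\bar\mu}(x)$, and extract the key gain $-\rho|g'(2\rho)|\,\bar\mu(B(x,\rho))$ from the inner ball via the convexity-induced quantitative distance increase together with the monotonicity of $g$ and $g'$ near the origin. The only cosmetic difference is that the paper uses $y=x+2\rho\nu$ and a two-region split (inner ball versus everything else, with a single global bound $\bar g(z-y)\le\bar g(z-x)+\rho C$ keyed to $\max g'$, not $\max|g'|$), whereas you use $y=x+\rho\nu$ and a three-region split where the intermediate annulus is discarded because its integrand is nonpositive; both yield the same estimate.
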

\begin{proof}
Let us call for brevity $\Gamma=\spt(\overline\mu)$, and let $x\in \partial\Gamma$ be a given point. Since $\Gamma$ is convex, we can take an external direction $\nu\in\S^1$ to $\Gamma$ at $x$, that is, for every $y\in\Gamma$ one has $(y-x)\cdot\nu\leq 0$. We also write $\psi$ in place of $\psi_{\overline\mu}$ for the sake of simplicity of notations. We now take $\rho\ll 1$, and we observe that
\begin{equation}\label{stieaspsi}
\psi(x+2\rho\nu)-\psi(x)= \int_\Gamma \left(\bar g(x+2\rho\nu-y) - \bar g(y-x)\right)\,d\overline\mu(y)\,.
\end{equation}
The convexity of $\Gamma$ implies that, for every $y\in\Gamma$,
\begin{equation}\label{closer}
|x+2\rho\nu-y| \geq |y-x|\,.
\end{equation}
As a consequence,
\begin{equation}\label{forany}
\bar g(x+2\rho\nu-y)\leq \bar g(y-x) + \rho C\,,
\end{equation}
where $C=2\max \big\{ g'(t),\, 0<t<\overline R+1\big\}$. Notice carefully that $C=C(g,N,\overline R)$ is a well-defined real number thanks to the fact the we are maximizing $g'(t)$ instead of $|g'(t)|$, and this is possible thanks to~(\ref{closer}), which in turn is a consequence of the convexity of $\Gamma$.\par

While the estimate~(\ref{forany}) is true for every $y\in\Gamma$, let us now take $y\in \Gamma\cap B(x,\rho)$. For such a $y$, not only we have~(\ref{closer}), but we also have
\[
|x+2\rho\nu-y| - |y-x|\geq \rho\,,
\]
and then since $g$ is decreasing and $g'$ increasing near $0$ we have
\[\begin{split}
\bar g(x+2\rho\nu-y)&=g(|x+2\rho\nu-y|)\leq g(|y-x|+\rho) \leq g(|y-x|) +\rho g'(|y-x|+\rho)\\
&\leq g(|y-x|) +\rho g'(2\rho)=\bar g(y-x) -\rho \big|g'(2\rho)\big| \,.
\end{split}\]
Insterting in~(\ref{stieaspsi}) this estimate for $y\in \Gamma\cap B(x,\rho)$, and the estimate~(\ref{forany}) for points $y\in\Gamma \setminus B(x,\rho)$, we obtain
\[
\frac{\psi(x+2\rho\nu)-\psi(x)}\rho \leq -\big|g'(2\rho)\big| \overline\mu\big( B(x,\rho)\big) + C\,.
\]
By Proposition~\ref{lowerlevel}, also keeping in mind Remark~\ref{forevery}, we know that $\psi(x)\leq \E(\overline\mu)\leq \psi(x+2\rho\nu)$, and then the above inequality implies~(\ref{massbndr}), hence concluding the thesis.
\end{proof}

\subsection{The proof Theorem~\mref{Linfty}\label{ssect:pf}}

This section is devoted to present the proof of Theorem~\mref{Linfty}. We start with a first case.

\begin{lemma}[$L^\infty$ bound for a rapidly exploding $\bar g$]\label{grapidinc}
Let $\bar g$ satisfy \Hz, with $\lim_{t\to\infty} g(t)=+\infty$ and
\begin{equation}\tag{\ref{addass2}}
\limsup_{t\searrow 0} |g'(t)| t^N>0\,.
\end{equation}
Let moreover $\overline\mu$ be a minimizer of the energy, either in $\P$ or in $\P_{\rm rad}$, with convex support. Then $\overline\mu\in L^\infty$, and in particular $\|\overline\mu\|_{L^\infty}\leq M(g,N)$.
\end{lemma}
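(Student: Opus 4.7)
The plan is to combine Theorem~\mref{existence} with the boundary mass estimate of Lemma~\ref{nearbdr} and an interior counterpart modelled on the proof of Lemma~\ref{mainlemma}, using assumption~\eqref{addass2} to convert mass estimates into pointwise density bounds. First, Theorem~\mref{existence} ensures that $\Gamma:=\spt(\overline\mu)\subset B_{\overline R}$ for some $\overline R=\overline R(g,N)$, so (together with the convexity assumption) $\Gamma$ is a bounded convex body. Next, \eqref{addass2} provides a constant $c_0>0$ and a sequence $\rho_k\searrow 0$ with $|g'(2\rho_k)|(2\rho_k)^N\geq c_0$, so that $1/|g'(2\rho_k)|\leq (2\rho_k)^N/c_0$.

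For boundary Lebesgue points $x\in\partial\Gamma$, Lemma~\ref{nearbdr} immediately yields $\overline\mu(B(x,\rho_k))\leq C/|g'(2\rho_k)|\leq (C/c_0)(2\rho_k)^N$, which via Lebesgue differentiation bounds the density of $\overline\mu$ at such $x$ by a constant $M_1=M_1(g,N)$. The central and more delicate step is to obtain the analogous density bound at an interior point $x_0$. Here the argument of Lemma~\ref{nearbdr} does not apply directly, since no outward normal is available, but the ideas behind Lemma~\ref{mainlemma} can be adapted: for $\rho$ small enough so that $B(x_0,2\rho)\subset\Gamma$, Proposition~\ref{lowerlevel} and Remark~\ref{forevery} give $\psi_{\overline\mu}\equiv\E(\overline\mu)$ on $B(x_0,2\rho)$, whence
\[
\int_\Gamma\bigl[\bar g(x_0+2\rho\nu-y)-\bar g(y-x_0)\bigr]\,d\overline\mu(y)=0\qquad\hbox{for every }\nu\in\S^{N-1}.
\]
Averaging this identity over $\nu\in\S^{N-1}$ and splitting the integrand between $y\in\Gamma\cap B(x_0,\rho)$ and $y\in\Gamma\setminus B(x_0,\rho)$, the subharmonicity of $\bar g$ on $B_r\setminus\{0\}$ (part of hypothesis~\Hz) makes the ``far'' contribution nonnegative, while the strong repulsion $|g'(t)|\gtrsim t^{-N}$ at scale $t=2\rho_k$ granted by \eqref{addass2} forces $\overline\mu(B(x_0,\rho_k))\lesssim \rho_k^N$, yielding a density bound $M_2=M_2(g,N)$.

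Setting $M:=\max\{M_1,M_2\}$ then gives the claimed estimate $\|\overline\mu\|_{L^\infty}\leq M$. The main obstacle is the interior step: Lemma~\ref{nearbdr}'s proof critically exploits the outward normal to guarantee $|y-(x+2\rho\nu)|\geq|y-x|$ for every $y\in\Gamma$, whereas at an interior $x_0$ no single direction plays this role, so the averaging over $\S^{N-1}$ combined with the subharmonicity of $\bar g$ in $B_r\setminus\{0\}$ must substitute for this monotonicity property; the cancellations between the ``near side'' and the ``far side'' of $x_0$ relative to the direction $\nu$ must be handled carefully. A further subtlety is that \eqref{addass2} only provides the necessary repulsive decay along the subsequence $\rho_k$, so density bounds are first obtained at these specific scales and then propagated to all sufficiently small radii by the monotonicity of $\rho\mapsto\overline\mu(B(x,\rho))$.
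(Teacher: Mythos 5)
There is a genuine gap in your interior argument, and also a conceptual misreading of the role of assumption~(\ref{addass2}). In the paper's proof the interior bound comes entirely from Lemma~\ref{mainlemma}, whose constant $M_0$ depends only on $N$, $\overline R$ and on $g$ restricted to $[r,\overline R]$ --- assumption~(\ref{addass2}) is never invoked there. The paper's constant is $M=\max\big\{M_0,\,2\cdot4^N C/(\omega_N\alpha)\big\}$, and the $\alpha$ in the second term only governs the boundary estimate through Lemma~\ref{nearbdr}. Your plan, in contrast, tries to extract the interior density bound from (\ref{addass2}) applied at scale $2\rho_k$, which is the wrong mechanism. To make this work you would effectively have to replicate Lemma~\ref{mainlemma} at the level of the raw measure, but the mollification $\mu_\rho=\overline\mu\ast\varphi_\rho$ is precisely what the paper uses to gain the $C^2$ regularity that Lemma~\ref{mainlemma} requires (and recall $\psi_{\mu_\rho}=\psi_{\overline\mu}\ast\varphi_\rho$ stays constant on $B(y,d)$ when $B(y,\rho+d)\subset\Gamma$, so one can pass to the radial average of $\mu_\rho$ around a maximum point and invoke Lemma~\ref{mainlemma} directly).

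The specific step that fails in your sketch is the claim that the ``far'' contribution is nonnegative by subharmonicity. After averaging over $\nu\in\mathbb S^{N-1}$ and writing $z=y-x_0$, the far contribution has integrand
\[
\intmed_{\partial B(z,2\rho)}\bar g\,d\mathcal H^{N-1}\ -\ \bar g(z)\,,
\]
and the sub-mean-value inequality holds only when $\bar g$ is subharmonic in $B(z,2\rho)$. This requires $0\notin B(z,2\rho)$, i.e.\ $|z|\geq 2\rho$; but your split at $|z|\geq\rho$ places the whole annulus $\rho\leq|z|<2\rho$ in the far part, where the ball $B(z,2\rho)$ wraps around the singularity. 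Already for the Newtonian kernel $\bar g(z)=|z|^{2-N}$ the mean over $\partial B(z,2\rho)$ equals $(2\rho)^{2-N}<|z|^{2-N}$ whenever $|z|<2\rho$, so the far contribution is strictly negative there. Moreover for $|z|>r$ you are outside the region where~\Hz\ guarantees subharmonicity at all, so this part of the far contribution must be handled by the $C^2$ (second-difference) estimate with the constant in~(\ref{estipsi2}), which your proposal does not address. A final minor remark: the passage ``propagated to all scales by monotonicity of $\rho\mapsto\overline\mu(B(x,\rho))$'' does not work if the admissible $\rho_k$'s are sparse; what saves the conclusion is the Lebesgue/Besicovitch density point theorem, which only needs the bound along \emph{some} sequence $\rho_k\searrow 0$ valid at \emph{every} point. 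The boundary part of your plan (Lemma~\ref{nearbdr} at $x\in\partial\Gamma$, combined with the subsequence along which $|g'(2\rho_k)|(2\rho_k)^N\geq c_0$) is essentially sound.
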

\begin{proof}
First of all, we apply Theorem~\mref{existence}, obtaining a constant $R=R(g,N)$ such that $\Gamma=\spt(\overline \mu)$ is contained in a ball of radius $R$. We set then $\overline R=2R+1$, we let $M_0(g,N,\overline R)$ and $C=C(g,N,\overline R)$ be the constants given by Lemma~\ref{mainlemma} and Lemma~\ref{nearbdr} respectively, and we define
\begin{equation}\label{defM}
M = \max \bigg\{ M_0 , \frac{2\cdot 4^N C}{\omega_N \alpha} \bigg\}\,,
\end{equation}
where $\alpha:=\limsup_{t\searrow 0} |g'(t)| t^N$. Notice that in most situations $\limsup_{t\searrow 0} |g'(t)| t^N =+\infty$, and in this case $M=M_0$. Notice also that $M$ only depends on $g$ and $N$, since $\overline R=\overline R(g,N)$.\par
For every $\rho\ll 1$, we consider a standard mollifier $\varphi_\rho:\R^N\to\R^+$, that is, a smooth, radial function supported in $B_\rho$ such that $\|\varphi_\rho\|_{L^1}=1$ and
\begin{equation}\label{goodmollif}
\|\varphi_\rho\|_{L^\infty} \leq \frac 2{\omega_N \rho^N}\,.
\end{equation}
We let then $\mu_\rho =\overline\mu\ast \varphi_\rho$, which is a positive, smooth function supported in a ball of radius $R+\rho$, and we claim that
\begin{equation}\label{mainest}
\| \mu_\rho \|_{L^\infty} \leq \max \bigg\{ M_0 , \frac{2}{\omega_N \rho^N}\cdot\sup_{x\in\partial\Gamma}\overline \mu\big(B(x,2\rho)\big) \bigg\}\,.
\end{equation}
We can easily show that this estimate concludes the proof. Indeed, we can take a sequence $\rho_j\searrow 0$ such that $g'(4\rho_j)(4\rho_j)^N\to \alpha$. Since the corresponding functions $\mu_{\rho_j}$ weakly* converge, in the sense of measures, to $\overline \mu$, we obtain the thesis, i.e. $\|\overline\mu\|_{L^\infty}\leq M$, directly by~(\ref{mainest}), since also thanks to Lemma~\ref{nearbdr} we have
\[
\lim_{j\to\infty} \frac{2}{\omega_N \rho_j^N}\, \overline \mu\big(B(x,2\rho_j)\big)
\leq \lim_{j\to\infty} \frac{2C }{\omega_N g'(4\rho_j) \rho_j^N}=\frac{2\cdot 4^N C }{\omega_N \alpha}\,.
\]
To conclude the thesis, we are then reduced to show the validity of~(\ref{mainest}). Let $y$ be a maximum point for the smooth function $\mu_\rho$. Suppose first that $y$ is contained in a $\rho$-neighborhood of $\partial\Gamma$, thus there exists some $x\in \partial\Gamma\cap B(y,\rho)$. In this case, also by~(\ref{goodmollif}) we have
\[
\|\mu_\rho\|_{L^\infty} = \mu_\rho(y) = \int_{B(y,\rho)} \varphi_\rho(y-z) d\overline\mu(z)
\leq \frac 2{\omega_N \rho^N} \, \overline\mu\big(B(y,\rho)\big)
\leq \frac 2{\omega_N \rho^N} \, \overline\mu\big(B(x,2\rho)\big)\,,
\]
hence~(\ref{mainest}) is established.\par

Let us now assume that the distance between $y$ and $\partial\Gamma$ is strictly greater than $\rho$, say $\rho+d$. In this case, the ball $B(y,\rho+d)$ is entirely contained either in $\Gamma$, or in $\R^N\setminus\Gamma$. However, this second case is impossible because it would imply $\mu_\rho(y)=0$, against the fact that $y$ is a maximum point for $\mu_\rho$, so we deduce $B(y,\rho+d)\subseteq \Gamma$. Now, we observe that $\psi_{\mu_\rho} = \psi_{\overline \mu}\ast \varphi_\rho$. Since $\psi_{\overline\mu}(z)=\E(\overline\mu)$ for a.e. $z\in\Gamma$ by Proposition~\ref{lowerlevel}, we deduce that $\psi_{\mu_\rho}(z)=\E(\overline\mu)$ for every $z \in B(y,d)$. Finally, we define $f$ as the radial average of $\mu_\rho$ around $y$, that is,
\[
f(x) = \intmed_{\partial B(y,|x|)} \mu_\rho(w)\, d\H^{N-1}(w)\,.
\]
Notice that $f$ is a smooth, radial function, with unit $L^1$ norm, supported in the ball $B_{2(R+\rho)}\subseteq B_{\overline R}$, and $0$ is a maximum point for $f$. Moreover, $\psi_f$ is constantly equal to $\E(\overline\mu)$ in the ball $B_d$. As a consequence, we can apply Lemma~\ref{mainlemma}, obtaining that $f(0)\leq M_0$, and since by construction $f(0)=\mu_\rho(y) = \|\mu_\rho\|_{L^\infty}$, we have obtained~(\ref{mainest}) also in this case and the proof is concluded.
\end{proof}

\begin{remark}
Notice that in the above lemma the assumption that $\lim_{t\to\infty} g(t)=+\infty$ has been used only to be able to apply Theorem~\mref{existence}, and in turn this was needed only to be sure that the support of $\overline\mu$ was contained in some ball. As a consequence, if $g$ does not explode at $\infty$ but a minimizer $\overline\mu$ has support which is convex and bounded, then it is still true that $\overline\mu$ is in $L^\infty$ (and in this case, the $L^\infty$ bound also depends on the diameter of the support).
\end{remark}

We want now to extend the $L^\infty$ bound in order to cover also cases when~(\ref{addass2}) does not hold. To do so, we will perturb the function $\bar g$ so to satisfy~(\ref{addass2}) and we will use the above lemma. It is simple to notice that the argument only works if we can approximate any measure with smooth functions in such a way that the energy converges. Therefore, we first have to show the following result. 

\begin{lemma}\label{lemapprox}
Assume that $g:(0,+\infty)\to \R^+$ is a continuous function, decreasing in a right neighborhood of $0$ and such that $\lim_{t\searrow 0} g(t) t^\alpha =0$ for some $0<\alpha<N$, and let $\bar g:\R^N\setminus \{0\}\to \R$ be given by $\bar g(x)=g(|x|)$. Then, for any probability measure $\mu\in\P$ with compact support there exists a sequence of smooth measures $\mu_j\in\P\cap {\rm C}^\infty_0(\R^N)$ weakly* converging to $\mu$ and such that
\begin{equation}\label{thesis315}
\lim_{j\to\infty} \E ( \mu_j) = \E(\mu)\,.
\end{equation}
Moreover, each measure $\mu_j$ belongs to $\P_{\rm rad}$ if so does $\mu$.
\end{lemma}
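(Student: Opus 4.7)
The plan is to approximate by mollification. Let $\varphi_\eps$ be a standard non-negative smooth radial mollifier with $\int \varphi_\eps = 1$ and support in $B_\eps$, and define $\mu_j := \mu * \varphi_{\eps_j}$ for a sequence $\eps_j \searrow 0$. Standard properties of convolution yield that $\mu_j$ is a smooth, compactly supported probability density, radial whenever $\mu$ is (convolution of radial objects remains radial), and $\mu_j \to \mu$ weakly-$*$. The whole content of the lemma is thus the energy convergence~(\ref{thesis315}). A Fubini computation yields the identity
\[
\E(\mu_j) = \iint_{\R^N\times\R^N} (\bar g * \Phi_{\eps_j})(w - z)\, d\mu(z)\, d\mu(w),
\]
where $\Phi_\eps := \varphi_\eps * \varphi_\eps$ is again a smooth non-negative radial mollifier of unit mass, supported in $B_{2\eps}$. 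Since $\bar g$ is continuous on $\R^N \setminus \{0\}$, $(\bar g * \Phi_{\eps_j})(c) \to \bar g(c)$ pointwise for every $c \neq 0$. Lower semicontinuity $\liminf_j \E(\mu_j) \geq \E(\mu)$ then follows by Fatou applied to the monotone truncations $\bar g_k := \min(\bar g, k)$ (which are bounded continuous on the bounded supports involved); in particular this already settles the case $\E(\mu) = +\infty$.

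Assume now $\E(\mu) < +\infty$. For the reverse inequality $\limsup_j \E(\mu_j) \leq \E(\mu)$ I would introduce a smooth radial cutoff $\chi_\delta:\R^N\to[0,1]$ vanishing on $B_{\delta/2}$ and equal to $1$ outside $B_\delta$, and split $\bar g = \bar g\chi_\delta + \bar g(1-\chi_\delta)$. The far piece $\bar g\chi_\delta$ is bounded and continuous on the bounded supports of $\mu_j\otimes\mu_j$, so weak-$*$ convergence gives
\[
\iint \bar g(y-x)\chi_\delta(y-x)\, d\mu_j(x)\,d\mu_j(y) \longrightarrow \iint \bar g(y-x)\chi_\delta(y-x)\, d\mu(x)\,d\mu(y) \leq \E(\mu).
\]
For the near piece, the hypothesis $g(t)t^\alpha \to 0$ provides $\bar g(v)(1 - \chi_\delta(v)) \leq \eta_\delta |v|^{-\alpha}$ with $\eta_\delta \to 0$ as $\delta \to 0$, so that the near-diagonal contribution to $\E(\mu_j)$ is at most $\eta_\delta\, I_\alpha(\mu_j)$, where $I_\alpha(\mu_j) := \iint |y-x|^{-\alpha}\, d\mu_j(x)\,d\mu_j(y)$ denotes the Riesz $\alpha$-energy.

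The key technical input is the positive-definiteness of the Riesz kernel: since $\widehat{|\cdot|^{-\alpha}}$ equals a positive constant times $|\xi|^{\alpha - N}\geq 0$ (well-defined because $0 < \alpha < N$), Plancherel yields
\[
I_\alpha(\mu_j) = c_{\alpha,N} \int |\xi|^{\alpha-N}\, |\hat\mu(\xi)|^2 |\hat\varphi_{\eps_j}(\xi)|^2\, d\xi \leq c_{\alpha,N} \int |\xi|^{\alpha-N}\, |\hat\mu(\xi)|^2\, d\xi = I_\alpha(\mu),
\]
thanks to $|\hat\varphi_{\eps_j}|\leq 1$. When $I_\alpha(\mu) < +\infty$, combining the two estimates and letting first $j\to\infty$ and then $\delta\to 0$ closes the estimate. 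The main technical obstacle is the case $I_\alpha(\mu) = +\infty$: here I would couple the previous cutoff with a further lower monotone approximation $\bar g_k \nearrow \bar g$ by bounded continuous kernels, bounding $\E(\mu_j) - \E_k(\mu_j) \le \eta_{\delta_k} I_\alpha(\mu_j)$ where now $I_\alpha(\mu_j)$ is only the finite Riesz energy of the smooth density $\mu_j$ (controlled by $\|\mu_j\|_\infty \sim \eps_j^{-N}$), and then choosing a diagonal $k = k(j)$ so that the residual error vanishes while $\E_{k(j)}(\mu_j) \to \E(\mu)$.
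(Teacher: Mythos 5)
Your mollification set-up, the lower bound via Fatou and truncations, and the far/near splitting $\bar g = \bar g\chi_\delta + \bar g(1-\chi_\delta)$ are all sound, and the Plancherel argument $I_\alpha(\mu_j)\leq I_\alpha(\mu)$ (via $|\hat\varphi_{\eps_j}|\leq 1$) is a clean tool that correctly closes the estimate in the regime $I_\alpha(\mu)<+\infty$. This is genuinely different from the paper, which proceeds by a direct dominated-convergence argument with explicit case-by-case bounds on the mollified kernel $\xi_j(x,y)$ in three regimes $|y-x|<3\rho/\eps$, $3\rho/\eps\leq|y-x|<r/2$, and $|y-x|\geq r/2$.

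The gap is precisely where you flag it: the case $I_\alpha(\mu)=+\infty$ is \emph{not} degenerate and cannot be dismissed. The hypothesis $\lim_{t\searrow 0}g(t)t^\alpha=0$ says $g$ is eventually smaller than any multiple of $t^{-\alpha}$, so finiteness of $\E(\mu)=\iint\bar g\,d\mu\,d\mu$ gives no control on $I_\alpha(\mu)=\iint|y-x|^{-\alpha}\,d\mu\,d\mu$; for example $g(t)=t^{-\alpha_0}$ with a measure $\mu$ whose $\alpha_0$-energy is finite but whose $\alpha$-energy diverges for every $\alpha>\alpha_0$ (and such measures exist) already defeats every admissible choice of $\alpha$. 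Your proposed diagonal to cover this case does not close: you need $\eta_{\delta(j)}\,I_\alpha(\mu_j)\to 0$, where $I_\alpha(\mu_j)\lesssim\|\mu_j\|_{L^\infty}^{\alpha/N}\sim\eps_j^{-\alpha}$, which forces $\delta(j)\to 0$ \emph{fast} relative to $\eps_j$, while simultaneously you need $\E_{\bar g\chi_{\delta(j)}}(\mu_j)\to\E(\mu)$, which \emph{a priori} forces $\delta(j)\to 0$ slowly relative to $\eps_j$ (mollification of the kernel $\bar g\chi_\delta$ near $|v|\approx\delta/2$ must not distort it by more than a vanishing factor, and the relevant modulus of continuity can be arbitrarily bad as $\delta\searrow 0$). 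There is no guarantee from the hypotheses that these two constraints leave a non-empty window for $\eps$. The paper's resolution is a genuinely separate idea that your write-up omits entirely: first dilate $\mu$ to $\mu^\eta=(\tau_\eta)_\#\mu$ with $\tau_\eta(x)=(1+\eta)x$, for which the identity $\iint\bar g\big((y-x)/(1+\eta)\big)\,d\mu^\eta\,d\mu^\eta=\E(\mu)<+\infty$ automatically supplies the enlarged integrability (the paper's hypothesis~(\ref{enlargeps})) needed to dominate the mollified energies; one shows $\E(\mu^\eta)\to\E(\mu)$ by monotone convergence, and then concludes by a triangular argument. You would need to add this (or some equivalent device) to make your proof complete for general $\mu$ with $\E(\mu)<+\infty$.
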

\begin{proof}
Let $g,\,\bar g$ and $\mu$ be as in the claim. As in the proof of Lemma~\ref{grapidinc}, for any $\rho>0$ we denote by $\varphi_\rho:\R^N\to\R$ a smooth, radial function supported in $B_\rho$, with unit $L^1$ norm and such that~(\ref{goodmollif}) holds. By lower semicontinuity of $\bar g$, for any sequence $\mu_j$ which weakly* converges to $\mu$ one has $\E(\mu)\leq \liminf \E(\mu_j)$. As a consequence, we can limit ourselves to consider the case when $\E(\mu)<+\infty$, since otherwise the claim is trivial, for instance one can define $\mu_j = \mu\ast \varphi_{1/j}$.\par

We start assuming that for some small, fixed $\eps>0$ one has
\begin{equation}\label{enlargeps}
\iint \bar g\bigg(\frac{y-x}{1+\eps}\bigg)\,d\mu(y)\,d\mu(x) <+\infty\,.
\end{equation}
Let us call $h(t)=g(t) t^\alpha$, which is a positive and continuous function which goes to $0$ when $t\searrow 0$. For any $j\in\N$, let us call $\rho= \min \big\{ t>0:\, h(3t/\eps)=1/j\}$. Keeping in mind that $\eps>0$ is small but fixed, we have that $\rho=\rho(j)>0$, and that $\lim_{j\to\infty} \rho=0$. We claim that
\begin{align}\label{goodgrho}
g(\rho)< \frac{3^\alpha g(3\rho/\eps)}{\eps^\alpha}\,, && g(3\rho/\eps) > \frac{\eps^\alpha(N-\alpha)}{3^\alpha 2^{N-\alpha}  N}\, \iintmed_{B_\rho\times B_\rho} \bar g(y-x)\,dy\,dx\,.
\end{align}
Indeed, the left inequality simply comes by observing that
\[
g(\rho) = \frac{h(\rho)}{\rho^{\alpha}} < \frac 1{j \rho^\alpha}=\frac{h(3\rho/\eps)}{\rho^\alpha} = \frac{3^\alpha g(3\rho/\eps)}{\eps^\alpha}\,.
\]
Concerning the right inequality, for any $0< t<2\rho$ one has $g(t) < 1/j t^{\alpha}$, thus
\[\begin{split}
\iint_{B_\rho\times B_\rho} \bar g(y-x)\,dy\,dx &\leq \omega_N \rho^N \int_{B_{2\rho}} \bar g(w)\,dw
= N\omega_N ^2\rho^N \int_{t=0}^{2\rho} g(t) t^{N-1}\, dt\\
&< \frac{N\omega_N ^2\rho^N }j \int_{t=0}^{2\rho} t^{N-1-\alpha}\, dt
= \frac{2^{N-\alpha}  N\omega_N^2\rho^{2N-\alpha}}{j(N-\alpha)}\\
&= \frac{3^\alpha 2^{N-\alpha}  N\omega_N^2\rho^{2N}}{\eps^\alpha(N-\alpha)}\,  g(3\rho/\eps) \,,
\end{split}\]
where in the last inequality we have used that $g$ is decreasing in $(0,3\rho/\eps)$, that is true by assumption as soon as $\rho\ll 1$, which in turn is true for $j$ large enough. We have then proved also the right inequality in~(\ref{goodgrho}).\par

We can now define $\mu_j=\mu\ast \varphi_\rho$, which is by construction a smooth probability measure with compact support, and which is radial if so is $\mu$. We can start calculating the energy of $\mu_j$ as
\begin{equation}\label{domconv}\begin{split}
\E(\mu_j)&=\iint \bar g(y-x)\, d\mu_j(y)\,d\mu_j(x)\\
&=\iint \bigg(\iint\bar g(y'-x')\, \varphi_\rho(y-y')   \varphi_\rho(x-x')\,dy' \,dx'\bigg)\, d\mu(y)  \, d\mu(x)\,.
\end{split}\end{equation}
Let us denote for brevity $\xi_j:\R^N\times \R^N\to \R^+$ as
\[
\xi_j(x,y) = \iint\bar g(y'-x')\, \varphi_\rho(y-y')   \varphi_\rho(x-x')\,dy' \,dx'\,,
\]
and let $r>0$ be such that $g$ is decreasing in $(0,r)$. If $j$ is large enough, we can assume that $6\rho<\eps r$. Let us then estimate $\xi_j(x,y)$ in three possible cases. First of all, assume that $|y-x|< 3\rho/\eps$. Then, by Riesz inequality, (\ref{goodmollif}) and both inequalities in~(\ref{goodgrho}) we have
\[\begin{split}
\xi_j(x,y) &\leq \iint\bar g(y'-x')\, \varphi_\rho(y')   \varphi_\rho(x')\,dy' \,dx' 
\leq \frac 4{\omega_N^2 \rho^{2N}} \iint_{B_\rho\times B_\rho}\bar g(y'-x') \,dy' \,dx'\\
&\leq \frac{3^\alpha 2^{N+2-\alpha}  N}{\eps^\alpha(N-\alpha)}\,  g(3\rho/\eps)
\leq \frac{3^\alpha 2^{N+2-\alpha}  N}{\eps^\alpha(N-\alpha)}\, \bar g(y-x)\,.
\end{split}\]
Second, assume that $3\rho/\eps\leq |y-x|<r/2$. In this case, we simply have
\[
\xi_j(x,y) \leq g(|y-x|-2\rho) \leq \bar g\bigg( \frac{y-x}{1+\eps}\bigg)\,.
\]
Finally, assume that $r/2\leq |y-x|\leq 2R$, where $R$ is a constant such that the support of $\mu$ is contained in a ball of radius $R$. In this case, we have
\[
\xi_j(x,y) \leq \max \big\{ g(t),\, |y-x|-2\rho\leq t\leq |y-x|+2\rho\big\} \leq 2 \bar g(y-x)\,,
\]
where the last inequality is true by the continuity of $g$ as soon as $\rho$ is small enough, hence again for any $j$ large enough. Putting together the last three estimates, we derive the existence of a constant $C=C(N,\alpha,\eps)$ such that for any $x,\,y \in (\spt\mu)^2$
\[
\xi_j(x,y) \leq C \bigg( \bar g(y-x) + \bar g \bigg( \frac{y-x}{1+\eps}\bigg)\bigg)\,.
\]
Since we are assuming that $\E(\mu)<+\infty$, as well as~(\ref{enlargeps}), the right hand side of the above inequality is integrable with respect to $\mu\otimes\mu$. Since the sequence $\xi_j(x,y)$ pointwise converges to $\bar g(y-x)$ when $j\to+\infty$, by the Dominated Converge Theorem and~(\ref{domconv}) we deduce that $\E(\mu_j)\to \E(\mu)$. In other words, $\{\mu_j\}$ is a sequence of smooth probability measures, radial if so is $\mu$, which weakly* converge to $\mu$ and which satisfy~(\ref{thesis315}). The proof is then concluded under the additional assumption~(\ref{enlargeps}).\par

Let us now assume that $\rho$ is a generic probability measure, not necessarily satisfying~(\ref{enlargeps}) for some $\eps>0$. For every $\eps>0$, let us now call $\tau_\eps:\R^N\to\R^N$ the function $\tau_\eps(x)= (1+\eps)x$, and let us set $\mu^\eps=(\tau_\eps)_\# \mu$, that is, for every function $\eta\in C_b(\R^N)$ we set
\[
\int \eta(x)\, d\mu^\eps(x) =\int \eta\big((1+\eps)x\big)\, d\mu(x)\,.
\]
Notice that also $\mu^\eps$ is a probability measure, radial if so is $\mu$, and that the sequence $\{\mu^\eps\}$ weakly* converge to $\mu$ when $\eps\searrow 0$. Moreover, since $g$ is decreasing in a right neighborhood of $0$ and $\mu$ has compact support, then again by the Dominated Convergence Theorem we have that
\[
\E(\mu^\eps) = \iint \bar g(y-x)\,d\mu^\eps(y)\,d\mu^\eps(x) = \iint \bar g\big((1+\eps)(y-x)\big)\,d\mu(y)\,d\mu(x) \freccia{\eps\searrow 0} \E(\mu)\,.
\]
In addition, $\mu^\eps$ satisfies~(\ref{enlargeps}) since
\[
\iint \bar g\bigg(\frac{y-x}{1+\eps}\bigg)\,d\mu^\eps(y)\,d\mu^\eps(x) = \iint \bar g(y-x)\,d\mu(y)\,d\mu(x) <+\infty\,.
\]
As a consequence, for every $\eps>0$ we can find a sequence $\{\mu^\eps_j\}$ of smooth probability measures, radial if so is $\mu$, which weakly* converge to $\mu^\eps$ and so that $\E(\mu^\eps_j)\to \E(\mu^\eps)$. The thesis follows then by a standard triangular argument.
\end{proof}

Thanks to the above approximation result, we are able to show the following $L^\infty$ bound without assuming the validity of~(\ref{addass2}).

\begin{lemma}[$L^\infty$ bound for a more general $\bar g$]\label{case3}
Let us assume that $\bar g$ satisfies \Hz\,and is subharmonic on $\R^N\setminus\{0\}$, as well that $\lim_{t\to\infty} g(t)=+\infty$. Then there exist a constant $M=M(g,N)$ and a measure $\bar\mu\in L^\infty$ which minimizes $\E$ in $\P_{\rm rad}$, with $\|\bar\mu\|_{L^\infty}\leq M$. If $N=1$, then there exists also a measure $\hat \mu\in L^\infty$ which minimizes $\E$ in the whole class $\P$, again with $\|\hat\mu\|_{L^\infty}\leq M$.
\end{lemma}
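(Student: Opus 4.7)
The plan is to perturb $\bar g$ by a kernel that strictly enforces~(\ref{addass2}) and strict subharmonicity, apply Lemma~\ref{grapidinc} uniformly in the perturbation parameter, and then pass to the weak-$*$ limit, using Lemma~\ref{lemapprox} to transfer the minimality property back to the original energy $\E$.

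Concretely, I would fix an exponent $\alpha\in(N-1,N)$ if $N\geq 2$ and $\alpha\in(0,1)$ if $N=1$, and for every small $\eps>0$ set $\bar g_\eps(x):=\bar g(x)+\eps|x|^{-\alpha}$ with corresponding energy $\E_\eps$. Since $\alpha<N$ the perturbation is in $L^1_{\rm loc}$, and a direct check shows that $\bar g_\eps$ still satisfies \Hz\ with the same $r>0$, is subharmonic on $\R^N\setminus\{0\}$ and strictly subharmonic on $B_r\setminus\{0\}$ (because $\alpha>N-2$ for $N\geq 2$ and $|x|^{-\alpha}$ is strictly convex for $N=1$), and satisfies $\lim_{t\to\infty}g_\eps(t)=+\infty$. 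Most importantly, since $|\phi'(t)|t^N=\alpha t^{N-\alpha-1}\to+\infty$, the kernel $\bar g_\eps$ enforces~(\ref{addass2}) in the strong form $\limsup_{t\searrow 0}|g_\eps'(t)|t^N=+\infty$. Theorem~\mref{existence} yields a minimizer $\mu_\eps$ of $\E_\eps$ in $\P_{\rm rad}$ (and in $\P$ if $N=1$); Proposition~\ref{convexN>1} if $N\geq 2$, or Proposition~\ref{convexN=1} if $N=1$, gives that its support is convex, so Lemma~\ref{grapidinc} applies and yields $\|\mu_\eps\|_{L^\infty}\leq M_\eps$.

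The main technical point is to bound $M_\eps$ uniformly in $\eps$. Since $\limsup_{t\searrow 0}|g_\eps'(t)|t^N=+\infty$, the expression~(\ref{defM}) collapses to the constant $M_0(g_\eps,N,\overline R_\eps)$ coming from Lemma~\ref{mainlemma}. The radius $\overline R_\eps=2R_\eps+1$ provided by Theorem~\mref{existence} can be chosen uniform in $\eps$: indeed $\E_\eps(\nu_0)=\E(\nu_0)+\eps\iint|y-x|^{-\alpha}d\nu_0(y)d\nu_0(x)$ is uniformly bounded for a fixed smooth $\nu_0\in\P$ (because $\alpha<N$), so a single $R^*$ works for every $\bar g_\eps$. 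Tracking the dependencies through Lemmas~\ref{Newteclem} and~\ref{mainlemma}, the constant $M_0$ depends on $\bar g_\eps$ only via $N$, $r$, $|g_\eps'(r)|$ and $\max_{[r,\overline R^*]}(|g_\eps'|+|g_\eps''|)$; since all of these differ from the corresponding quantities for $\bar g$ by $O(\eps)$, one obtains $M_\eps\leq M$ for some $M=M(g,N)$ and every $\eps$ small. This uniform bookkeeping is the hardest step; everything else is a rather standard limiting argument.

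Passing to the limit, up to a subsequence $\mu_\eps$ weakly-$*$ converges to some $\bar\mu\in L^\infty\cap\P_{\rm rad}$ with $\|\bar\mu\|_{L^\infty}\leq M$ (and analogously to some $\hat\mu\in L^\infty\cap\P$ when $N=1$), the $L^\infty$ bound surviving by lower semicontinuity of the $L^\infty$ norm under weak-$*$ convergence of equibounded measures with uniformly bounded supports. To see that $\bar\mu$ minimizes $\E$ in $\P_{\rm rad}$, fix a competitor $\nu\in\P_{\rm rad}$ with $\E(\nu)<+\infty$ and apply Lemma~\ref{lemapprox} (whose hypothesis $g(t)t^\alpha\to 0$ for some $\alpha<N$ is ensured by the Bocher-type decay of radial subharmonic singularities) to obtain smooth $\nu_j\in\P_{\rm rad}$ with $\E(\nu_j)\to\E(\nu)$; each $\nu_j$ is smooth with compact support, so $\iint|y-x|^{-\alpha}d\nu_j d\nu_j<+\infty$ and hence $\E_\eps(\nu_j)\to\E(\nu_j)$ as $\eps\to 0$. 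Using the minimality of $\mu_\eps$ and the lower semicontinuity of $\E$ gives
\[
\E(\bar\mu)\leq\liminf_{\eps\to 0}\E(\mu_\eps)\leq\liminf_{\eps\to 0}\E_\eps(\mu_\eps)\leq\lim_{\eps\to 0}\E_\eps(\nu_j)=\E(\nu_j)\,,
\]
and letting $j\to\infty$ yields $\E(\bar\mu)\leq\E(\nu)$, as required. The same argument with $\P_{\rm rad}$ replaced by $\P$ handles the case of $\hat\mu$ when $N=1$.
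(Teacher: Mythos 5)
Your overall strategy — perturbing $\bar g$ so that $(\ref{addass2})$ and strict subharmonicity hold, applying Lemma~\ref{grapidinc} with a uniform bound, and passing to the weak-$*$ limit — is the same as the paper's. The perturbation $\eps|x|^{-\alpha}$ with $\alpha\in(N-1,N)$ is a reasonable alternative to the paper's compactly supported perturbation $h$ (the paper makes $g_\eps\equiv g$ on $[r,\overline R]$, which gives $M_0$ exactly independent of $\eps$, whereas you get $O(\eps)$ drift; either works), and your uniform choice of $R$ via a fixed competitor is fine.

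The genuine gap is in the step where you invoke Lemma~\ref{lemapprox}. You claim its hypothesis $\lim_{t\searrow 0}g(t)t^\alpha=0$ for some $\alpha<N$ ``is ensured by the Bocher-type decay of radial subharmonic singularities.'' This is not true: assumption \Hz\ plus subharmonicity plus $L^1_{\rm loc}$ do \emph{not} force any such power-law decay on $g$ — for instance functions with $g(t)\sim t^{-N}(\log(1/t))^{-2}$ are $L^1_{\rm loc}$ and can be radially subharmonic, yet $g(t)t^\alpha\to+\infty$ for every $\alpha<N$. The paper's own proof carefully separates two sub-cases precisely to handle this. When~$(\ref{addass2})$ \emph{fails}, one has $g'(t)t^N\to 0$, which integrates to $g(t)t^{N-1}\to 0$, and \emph{this} is what makes Lemma~\ref{lemapprox} applicable (with $\alpha=N-1$, or any $\alpha\in(0,N)$ when $N=1$); Bôcher's theorem plays no role. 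When~$(\ref{addass2})$ \emph{holds} but $\bar g$ is only non-strictly subharmonic near $0$, Lemma~\ref{lemapprox} cannot in general be invoked, and the paper circumvents it by taking the perturbation $h(t)=t^2$, whose associated energy $\E_h(\nu)$ is automatically finite for \emph{every} compactly supported competitor $\nu$, so that one can run the inequality chain in~$(\ref{repeat})$ directly with $\nu$ in place of a regularized $\nu_j$. Your single-perturbation argument collapses these two sub-cases and therefore leaves the second one unjustified: with $h(t)=t^{-\alpha}$ you must regularize competitors, and you have not shown that the regularization preserves the energy $\E$ in the limit. To repair the argument you should either (a) verify $g(t)t^{N-1}\to 0$ under the hypothesis that~$(\ref{addass2})$ fails and treat the case ``$(\ref{addass2})$ holds, $\bar g$ not strictly subharmonic'' separately with a bounded perturbation such as $t^2$, or (b) give an actual proof that $g(t)t^\alpha\to 0$ for some $\alpha<N$ under the precise hypotheses of the lemma, which you have not done and which is not obviously true.
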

\begin{proof}
If $\bar g$ is strictly subharmonic in $B_r\setminus \{0\}$ and~(\ref{addass2}) holds, then \emph{every} minimal measure in $\P_{\rm rad}$, as well as \emph{every} minimal measure in $\P$ if $N=1$ (and such measures exist because the assumptions allow us to apply Theorem~\mref{existence}) has bounded and convex support by Proposition~\ref{convexN=1} or Proposition~\ref{convexN>1}, and then it satisfies the $L^\infty$ bound by Lemma~\ref{grapidinc}. We have then only to consider the case when~(\ref{addass2}) does not hold, or $\bar g$ is not \emph{strictly} subharmonic on some $B_r\setminus\{0\}$.\par

Let us first suppose that~(\ref{addass2}) does not hold (the case when~(\ref{addass2}) holds and $\bar g$ is not strictly subharmonic on some $B_r\setminus\{0\}$ is much simpler, and it will be discussed at the end of the proof). We can define $h:\R^+\to\R^+$ as the function such that $h(t)=0$ for $t\geq r$, while for $0<t<r$
\[
h(t) = t^{-N+\frac 12} +\frac{-4N^2-8N-3}8 \,r^{-N+\frac 12} + \frac{4N^2+4N-3}4\, r^{-N-\frac 12} t - \frac{4N^2-1}8\, r^{-N-\frac 32} t^2\,.
\]
An elementary calculation ensures that the function $\bar h:\R^N\setminus \{0\}\to\R^+$ given by $\bar h(x)=h(|x|)$ is ${\rm C}^2$, radial, subharmonic, strictly subharmonic in $B_r\setminus\{0\}$, and that $\limsup_{t\searrow 0} |h'(t)|t^N=+\infty$. We define then $g_\eps=g+\eps h$, and consistently $\bar g_\eps=\bar g+\eps\bar h$. Notice that $\bar g_\eps$ clearly satisfies assumption \Hz, is \emph{strictly} subharmonic in $B_r\setminus \{0\}$, and satisfies~(\ref{addass2}). For brevity of notations, we write $\E_h$ to denote the energy corresponding to the function $h$, and $\E_\eps$ to denote the energy corresponding to the function $g_\eps$, so in particular $\E_\eps=\E+\eps \E_h$.\par

We can apply Theorem~\mref{existence} with the function $\bar g_\eps$ in place of $\bar g$, and this ensures the existence of a minimizer of the energy $\E_\eps$ both in $\P$ and in $\P_{\rm rad}$. Notice that, calling $B$ the ball centered at the origin and with unit volume, $\E_\eps(\Chi{B})$ is a continuous function of $\eps$, converging to $\E(\Chi{B})\in (0,+\infty)$ when $\eps\searrow 0$. As a consequence, the claim of Theorem~\mref{existence} ensures the existence of some $R>0$, depending on $g$ and $N$ but not on $\eps$, such that if $\eps$ is small enough then the support of every measure minimizing the energy $\E_\eps$ in $\P$ or in $\P_{\rm rad}$ is contained in the ball $B(0,R)$ of radius $R$.\par

Let us then call $\mu_\eps$ a minimizer of $\E_\eps$ in $\P_{\rm rad}$. If $N=1$, we can also call $\mu_\eps$ a minimizer in $\P$. Every $\mu_\eps$ has support in the ball $B(0,R)$, and it has convex support. Indeed, if $N>1$ (hence $\mu_\eps$ minimizes the energy $\E_\eps$ in $\P_{\rm rad}$) then Proposition~\ref{convexN>1} ensures the support to be a closed ball. Instead, if $N=1$, then Proposition~\ref{convexN=1} ensures that the support is a closed segment, both if $\mu_\eps$ minimizes in $\P$ and in $\P_{\rm rad}$. We can the apply Lemma~\ref{grapidinc} to $\bar g_\eps$, obtaining that $\mu_\eps\in L^\infty$ and $\|\mu_\eps\|_{L^\infty}\leq M(g_\eps,\, N)=M(g,\eps,N)$.\par

We want to show that $M$ actually does not depend on $\eps$. To do so, we recall that by~(\ref{defM}) the value of $M$ coincides with $M_0$, since $\limsup_{t\searrow 0} |g_\eps'(t)|  t^N=+\infty$. And in turn, by Lemma~\ref{mainlemma}, the value of $M_0$ depends on $N,\, \overline R$ and on the restriction of $g_\eps$ to $[r,\overline R]$, where $\overline R=2R+1$. Since all the functions $g_\eps$ coincide with $g$ in $[r,\overline R]$, we have shown that $M$ only depends on $g$ and $N$, and not on $\eps$.\par

We can then find a sequence $\eps_j\searrow 0$ such that the measures $\mu_{\eps_j}$ weakly* converge to some $\bar\mu$, which is a probability measure since the measures $\mu_\eps$ are all concentrated in a same ball $B(0,R)$, and which is radially symmetric if so are all the measures $\mu_\eps$. By construction, we have that $\bar\mu$ is actually in $L^\infty$, and that $\|\bar\mu\|_{L^\infty}\leq M$. The proof will then be concluded once we show that $\bar\mu$ is a minimizer for the energy $\E$ in the class $\P_{\rm rad}$, or in the class $\P$ if $N=1$ and we are considering the minimization in $\P$.\par

To do so, let $\mu$ be any other probability measure, in $\P$ or in $\P_{\rm rad}$, and let us try to show that $\E(\bar\mu)\leq \E(\mu)$. First of all, we observe that the fact that~(\ref{addass2}) does not hold for $g$  means that $\lim_{t\searrow 0} g'(t)t^N=0$, and this immediately implies that $\lim_{t\searrow 0} g(t)t^{N-1}=0$. As a consequence, Lemma~\ref{lemapprox} gives us a sequence $\{\mu^n\}$ of smooth probability measures which weakly* converge to $\mu$ and so that $\E(\mu^n)\to \E(\mu)$. For any fixed $n\in\N$, by lower semicontinuity of the cost, the fact that $g\leq g_{\eps_j}$, and that $\mu_{\eps_j}$ minimizes the energy $\E_{\eps_j}$ in its class, we have
\begin{equation}\label{repeat}\begin{split}
\E(\bar\mu) &\leq \liminf_{j\to \infty} \E(\mu_{\eps_j})
\leq \liminf_{j\to \infty} \E_{\eps_j}(\mu_{\eps_j})\\
&\leq \liminf_{j\to\infty} \E_{\eps_j}(\mu^n)=\E(\mu^n) + \liminf_{j\to\infty}  \eps_j\E_h(\mu^n)
=\E(\mu^n)\,.
\end{split}\end{equation}
It is important to notice that the last equality holds because $\eps_j\to 0$ and because $\E_h(\mu^n)<+\infty$. And this last fact is true because $\mu^n$ is a smooth function with compact support, and $h\in L^1_{\rm loc}$. Directly using the measure $\mu$ in place of $\mu^n$ in the above chain of inequalities would clearly work if $\E_h(\mu)<+\infty$, but there is in general no guarantee that this is true, and this is why we had to use the regular measures $\mu^n$. Having observed that $\E(\bar\mu)\leq \E(\mu^n)$ for any generic $n\in\N$, and keeping in mind that $\E(\mu^n)\to\E(\mu)$ when $n\to\infty$, we conclude that $\E(\bar\mu)\leq \E(\mu)$ are required, and this ends the proof under the assumption that~(\ref{addass2}) does not hold.\par

To conclude the proof, we have then only to consider the case when~(\ref{addass2}) holds and $g$ fails to be strictly subharmonic on some $B_r\setminus \{0\}$ (the case when~(\ref{addass2}) holds and $g$ is strictly subharmonic on some $B_r\setminus \{0\}$ has been considered at the beginning). This case is much simpler than the one already studied. Indeed, this time it is enough to define $g_\eps = g + \eps t^2$ and to argue as before. Everything works without difficulties except for the fact that this time it is not necessarily true that $g(t) t^{N-1}\to 0$, so we are not allowed to use Lemma~\ref{lemapprox} to obtain the sequence $\{\mu^n\}$. However, there is no need to do so; indeed, as observed above, the only reason to use the sequence $\{\mu^n\}$ was that its regularity guaranteed that $\E_h(\mu^n)<+\infty$, while in general $\E_h(\mu)$ could have been $+\infty$. But this time, the function $h$ is simply given by $h(t)=t^2$, hence the fact that $\E_h(\mu)<+\infty$ is surely true because $\mu$ is compactly supported, and then one can directly use $\mu$ in place of $\mu^n$ in~(\ref{repeat}) without using Lemma~\ref{lemapprox}. The proof is then finished.
\end{proof}

The proof of Theorem~\mref{Linfty} is then concluded. Indeed, case~(1) is considered in Lemma~\ref{grapidinc}; case~(2) follows from case~(1) since the assumptions guarantee that the optimal measure have convex support by Proposition~\ref{convexN=1} and Proposition~\ref{convexN>1}; and case~(3) is considered in Lemma~\ref{case3}.

\begin{remark}\label{ConjFrLi}
We conclude this section with a comment about the importance of Theorem~\mref{Linfty} in connection with the different possible settings of the problem. Let us recall that, as described in the Introduction, the minimization of the energy has been considered in the literature among sets of given measure, or among functions with values in $[0,1]$ and given $L^1$ norm, or among positive measures with given mass. Each setting clearly extends the previous ones, so in particular if a minimizer within a class belongs to one of the previous classes, then it is also a minimizer for that class.\par
A simple, yet deep difference between the problem among measures and the problem among sets, or functions with values in $[0,1]$, is the dependance of the minimizers with the mass. Indeed, the homogeneity of the energy $\E$ among measures implies that dependance on the mass of the minimal measures is trivial, in the sense that the minimizing measures for different masses are simply a multiple of each other. On the contrary, when considering the problem among sets, or among functions with values in $[0,1]$, the different scaling properties of the attractive and repulsive part of the energy imply that the solutions heavily depend on the mass. For instance, Frank and Lieb in~\cite{FL} show that for some values of the parameters in~(\ref{prototype}) the solutions among functions with values in $[0,1]$ are actually characteristic functions (of balls) whenever the mass is large enough. However, this is no more true for smaller mass, and in general one could expect solutions which are functions having also intermediate values between $0$ and $1$. Even more, Frank and Lieb conjecture in~\cite[Remark~2 (3)]{FL} that for some values of the parameters solutions are never characteristic functions, whatever the mass is.\par

Let us now notice that, if Theorem~\mref{Linfty} applies, then the minimizing probability measure is an $L^\infty$ function. As a consequence, the minimizer among the measures with mass $m$ is a function with $L^\infty$ norm smaller than $1$ as soon as $m$ is small enough. And then, it is also a minimizer for the problem among $L^1$ functions with values in $[0,1]$. This implies that, under the assumptions of Theorem~\mref{Linfty}, the minimizers of the problem among functions with values in $[0,1]$ are surely not characteristic functions of sets if the mass is small enough.
\end{remark}

\section{On the uniqueness and radiality of the optimal measure\label{sect:unique}}

This last section is devoted to discuss the question of the uniqueness (of course, up to translations) and radiality of the optimal measures, and in particular to prove Theorem~\mref{Unique}. We start noticing that, for a general function $\bar g$, there is no reason why there should be a unique optimal measure, and it is easy to build examples in which the uniqueness fails. Less clear is the question whether optimal measures should be radial. Indeed, on one side, the problem is rotationally invariant; but on the other side, in most of the examples the function $g$ has a unique minimum point, say $t_{\rm min}$, and then, roughly speaking, points would like to stay at distance $t_{\rm min}$ from each other, and this somehow pushes against the radiality. The two questions are related, because obviously if there is a unique optimal measure then it must be radial, since all the rotations of this measure are also optimal and then have to coincide with it.

As already pointed out in the Introduction, the question of the uniqueness has already been treated in some papers. More precisely, in~\cite{BCT} it is shown that there is a unique optimal measure whenever $\bar g(x)=|x|^{-\alpha} + |x|^2$ with any $0<\alpha<N$. Later on, the same result was obtained in~\cite{Lop} for $\bar g(x)=|x|^{-\alpha}+|x|^\beta$ with any $2\leq \beta\leq 4$, and again $0<\alpha<N$. We can now further generalise the result by substituting the term $|x|^{-\alpha}$ with any positive definite function, basically using the same approach of~\cite{Lop} and the properties of the positive definite functions. We start with the definition.

\begin{defin}[Strongly positive definite functions]\label{defposdef}
Given a $L^1_{\rm loc}$ function $\bar h:\R^N\to\R^+$, for any two measures $\mu,\,\nu\in\P(\R^N)$ we define
\[
\E_{\bar h}(\mu,\nu) = \iint_{\R^N\times\R^N} \bar h(y-x)\, d\mu(x)\,d\nu(y)\,.
\]
Then, we say that \emph{$\bar h$ is strongly positive definite} if for any $\mu,\,\nu\in\P(\R^N)$ one has
\begin{equation}\label{tocheck}
\E_{\bar h}(\mu,\mu)+\E_{\bar h}(\nu,\nu) \geq 2 \E_{\bar h}(\mu,\nu)  \,,
\end{equation}
and that \emph{$\bar h$ is strictly strongly positive definite} if the inequality is strict whenever $\mu\neq \nu$.
\end{defin}
The definition of positive definite functions is quite standard, see for instance~\cite{Rudin,LL,M}, and it simply consists in asking the validity of the property~(\ref{tocheck}) when $\mu$ and $\nu$ are $L^1$ functions with unit $L^1$ norm, or characteristic functions of sets of unit volume --these two choices are equivalent by a simple approximation argument. We add the word ``strongly'' to remember that our assumption is in principle stronger, since we want to test with every probability measure. However, it is simple to see that the two notions are in fact equivalent, at least for functions for which the measures can be approximated by functions in energy, as we show now.

\begin{lemma}\label{equivstr}
Assume that $\bar h:\R^N\to\R^+$ is a $L^1_{\rm loc}$ function with the property that for any measure $\mu\in\P(\R^N)$ there is a sequence of smooth functions $\mu_j\in \P\cap {\rm C}^\infty_0(\R^N)$ weakly* converging to $\mu$ and such that $\E_{\bar h}(\mu_j,\mu_j) \to \E_{\bar h}(\mu,\mu)$ for $j\to\infty$. Then, $\bar h$ is strongly positive definite if and only if it is positive definite.
\end{lemma}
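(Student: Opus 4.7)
The forward implication is immediate: smooth probability functions in $\P\cap C^\infty_0(\R^N)$ form a subclass of $\P(\R^N)$, so the strongly positive definite property, which tests against every probability measure, specializes at once to the weaker notion of positive definiteness as formulated in the paper.

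For the reverse implication, the plan is a standard approximation argument. Given $\mu,\nu\in\P(\R^N)$, I would first dispose of the trivial case $\E_{\bar h}(\mu,\mu)=+\infty$ or $\E_{\bar h}(\nu,\nu)=+\infty$, in which (\ref{tocheck}) holds automatically. Otherwise, I would apply the hypothesis to $\mu$ and to $\nu$ separately, extracting sequences $\mu_j,\nu_j\in \P\cap C^\infty_0(\R^N)$ with $\mu_j\rightharpoonup \mu$, $\nu_j\rightharpoonup \nu$ and
\[
\E_{\bar h}(\mu_j,\mu_j)\longrightarrow \E_{\bar h}(\mu,\mu), \qquad \E_{\bar h}(\nu_j,\nu_j)\longrightarrow \E_{\bar h}(\nu,\nu).
\]
Because each $\mu_j$ and $\nu_j$ is a smooth probability density, positive definiteness (in the weaker sense, which the paper recalls is the same as testing with $L^1$ functions of unit mass) furnishes for every $j$ the inequality
\[
\E_{\bar h}(\mu_j,\mu_j)+\E_{\bar h}(\nu_j,\nu_j)\geq 2\,\E_{\bar h}(\mu_j,\nu_j).
\]
I would then pass to the $\liminf$ as $j\to\infty$: the left-hand side converges to $\E_{\bar h}(\mu,\mu)+\E_{\bar h}(\nu,\nu)$ by construction; and for the right-hand side, since $\mu_j\rightharpoonup \mu$ and $\nu_j\rightharpoonup \nu$ are tight (being probability measures with probability limits), one has the joint weak* convergence $\mu_j\otimes\nu_j\rightharpoonup \mu\otimes\nu$ on $\R^N\times\R^N$, and then the standard lower semicontinuity of integrals of non-negative l.s.c. functions against weakly* converging measures yields
\[
\liminf_{j\to\infty}\E_{\bar h}(\mu_j,\nu_j)\geq \E_{\bar h}(\mu,\nu).
\]
Combining the two limit statements proves (\ref{tocheck}) for arbitrary $\mu,\nu\in\P(\R^N)$.

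The only delicate point---and the main obstacle---is the very last semicontinuity step, because Definition~\ref{defposdef} does not explicitly require $\bar h$ to be lower semicontinuous, only $L^1_{\rm loc}$ and non-negative. In every concrete setting of the paper $\bar h$ is continuous away from the origin and l.s.c., so this is not an issue in context; if one wishes to argue in the stated generality, I would replace $\bar h$ by its l.s.c.\ envelope (or approximate from below by l.s.c.\ truncations) before running the above convergence argument, which is why this step, though technically routine, is the one that most deserves care.
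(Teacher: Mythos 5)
Your argument is correct and is essentially the paper's proof: the forward direction is trivial, and the reverse uses the energy-convergent smooth approximants for both measures, positive definiteness of the smoothed pair, and lower semicontinuity of the cross-energy to pass to the limit. The paper handles the cross-term via a nested $\liminf_j\liminf_n$ over two independent indices rather than your joint weak* convergence of the product measures (the two are interchangeable), and your observation that lower semicontinuity of $\bar h$ is being used tacitly is accurate -- the paper's own proof also invokes it without listing it among the hypotheses, relying on the fact that in every application $\bar h$ is l.s.c.
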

\begin{proof}
Of course, whenever $\bar h$ is strongly positive definite, then it is also positive definite, so we only have to prove the opposite implication. Let us then assume that $\bar h$ is positive definite, and let $\mu$ and $\nu$ be two probability measures. By assumption, we can take two sequences $\{\mu_j\}$ and $\{\nu_n\}$ in $\P\cap {\rm C}^\infty_0(\R^N)$ which weakly* converge to $\mu$ and $\nu$ respectively, and such that
\begin{align*}
\lim_{j\to \infty} \E_{\bar h}(\mu_j,\mu_j) = \E_{\bar h}(\mu,\mu)\,, && \lim_{n\to \infty} \E_{\bar h}(\nu_n,\nu_n) = \E_{\bar h}(\nu,\nu)\,.
\end{align*}
Since~(\ref{tocheck}) is true for any pair $(\mu_j,\, \nu_n)$, by lower semicontinuity we have
\[\begin{split}
2\E_{\bar h}(\mu,\nu) &\leq \liminf_{j\to\infty} 2\E_{\bar h}(\mu_j,\nu)
\leq \liminf_{j\to\infty} \liminf_{n\to\infty} 2\E_{\bar h}(\mu_j,\nu_n)\\
&\leq \liminf_{j\to\infty} \liminf_{n\to\infty} \E_{\bar h}(\mu_j,\mu_j) + \E_{\bar h}(\nu_n,\nu_n)
=\E_{\bar h}(\mu,\mu)+\E_{\bar h}(\nu,\nu)\,.
\end{split}\]
Therefore, $\bar h$ is strongly positive definite.
\end{proof}

As said above, the notion of positive definiteness is well known. In particular, the following sufficient conditions for the positive definiteness are known, see for instance~\cite{NP}.
\begin{theorem}\label{knowndefpos}
Let $\bar h:\R^N\to \R^+$ be a $L^1_{\rm loc}$ function. If $\lim_{|x|\to \infty} \bar h(x) = \inf \bar h$ and $\bar h$ is subharmonic in $\R^N\setminus\{0\}$, then $\bar h$ is positive definite. If $\limsup_{|x|\to\infty} \bar h(x)<+\infty$ and the Fourier transform of $\bar h$ is a positive Borel measure, then $\bar h$ is positive definite.
\end{theorem}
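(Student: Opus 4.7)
\smallskip
\textbf{Plan.} Writing $\sigma := \mu - \nu$, which is a compactly supported finite signed measure with $\sigma(\R^N)=0$, the positive definiteness inequality~(\ref{tocheck}) is equivalent to
\[
\iint_{\R^N\times\R^N} \bar h(y-x)\, d\sigma(x)\, d\sigma(y) \;\geq\; 0,
\]
so it is this inequality that I plan to establish in both cases.

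\smallskip
For the \emph{second} assertion, my approach is pure Fourier analysis. The hypothesis $\limsup_{|x|\to\infty}\bar h(x)<+\infty$ together with $\bar h \in L^1_{\rm loc}$ makes $\bar h$ a tempered distribution, so that $\hat{\bar h}$ is well-defined and, by assumption, equals a positive Borel measure $\lambda$. Given $\sigma$ as above, convolve it with a smooth radial mollifier $\rho_\eps$ to obtain $\sigma_\eps \in \mathcal{S}(\R^N)$ with $\widehat{\sigma_\eps}(0)=\sigma(\R^N)=0$ and $|\widehat{\sigma_\eps}|=|\widehat{\rho_\eps}||\hat\sigma|\leq |\hat\sigma|$. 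For Schwartz data the Parseval identity gives
\[
\iint \bar h(y-x)\, d\sigma_\eps(x)\, d\sigma_\eps(y)\;=\;\int_{\R^N} |\widehat{\sigma_\eps}(\xi)|^2\, d\lambda(\xi)\;\geq\; 0,
\]
and I then pass to the limit $\eps\searrow 0$. The left hand side converges by dominated convergence (using $\bar h\in L^1_{\rm loc}$ and the compact support of $\sigma$), while the right hand side converges by monotone convergence as $|\widehat{\sigma_\eps}|^2\nearrow |\hat\sigma|^2$, producing \eqref{PDsigma} with the needed sign.

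\smallskip
For the \emph{first} assertion, the strategy is to reduce to the second, by showing that subharmonicity in $\R^N\setminus\{0\}$ together with $\bar h(x)\to\inf\bar h$ as $|x|\to\infty$ forces $\hat{\bar h}$ to be a positive Borel measure. Since subtracting the constant $\inf\bar h$ does not affect~\eqref{PDsigma} (as $\sigma$ has zero total mass), nor the Laplacian, nor the decay, I may assume $\bar h\geq 0$ with $\bar h(x)\to 0$ as $|x|\to\infty$. Then the Riesz representation theorem for subharmonic functions gives that the distributional Laplacian $\Delta\bar h$ is a nonnegative Radon measure $\zeta$ on $\R^N\setminus\{0\}$, to which the conditions $\bar h\geq 0$ and $\bar h\to 0$ at infinity add an appropriate positive singular contribution $\zeta_0$ concentrated at the origin. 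Calling $\Phi_N$ the fundamental solution of $-\Delta$ (namely $c_N|x|^{2-N}$ for $N\geq 3$, and the analogues for $N=1,2$), the Liouville argument for harmonic functions decaying at infinity yields the representation $\bar h(x)=\int_{\R^N} \Phi_N(x-y)\,d(\zeta+\zeta_0)(y)$. Since $\widehat{\Phi_N}(\xi)=c_N'|\xi|^{-2}$ is a positive locally integrable function on $\R^N\setminus\{0\}$ (for $N\geq 3$; the analogous computation for small $N$ is elementary) and the Fourier transform of the positive measure $\zeta+\zeta_0$ is a continuous positive definite function, the product $\hat{\bar h}=c_N'|\xi|^{-2}\widehat{\zeta+\zeta_0}$ defines a positive tempered distribution, hence a positive Borel measure; the first part of the statement then follows from the second part applied to $\bar h$.

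\smallskip
\textbf{Main obstacle.} The delicate point in the first part is the Riesz-type representation and, in particular, the identification of the singular part $\zeta_0$ of $\Delta\bar h$ at the origin, ensuring that the non-negativity of $\bar h$ is exactly encoded by positivity of $\zeta_0$; one must also handle the low-dimensional cases $N=1,2$ where $\Phi_N$ does not decay at infinity, which requires a suitable renormalisation of the potential. An alternative, and possibly cleaner, route that I would try in parallel is to first reduce to the radial case via spherical symmetrisation (all hypotheses in the first part being preserved by radial rearrangement), in which case the subharmonicity reads $(r^{N-1}\phi'(r))'\geq 0$ for the radial profile $\phi=\phi(r)$, and then to use classical Bessel-integral formulas to show directly that the Fourier transform of such a radial function is a positive measure, again reducing to the second part.
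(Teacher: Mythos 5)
A preliminary remark on the comparison you asked for: the paper does not prove Theorem~\ref{knowndefpos} at all --- it is quoted as a known result with a citation to~\cite{NP} --- so your proposal can only be judged on its own merits, not against an internal proof.

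Your argument for the \emph{second} assertion is the standard Bochner--Parseval mechanism and is the right idea, but the closing limit is not justified as written: the left-hand side is $\iint (\bar h*\rho_\eps*\tilde\rho_\eps)(y-x)\,d\sigma(x)\,d\sigma(y)$, and $\sup_{\eps}\bar h*\rho_\eps*\tilde\rho_\eps$ is only controlled by a maximal function, which for a general $L^1_{\rm loc}$ kernel need not be locally integrable, so ``dominated convergence'' does not come for free; also $\sigma=\mu-\nu$ is not compactly supported in general. Both points are repairable: by the paper's own definition it suffices to test with characteristic functions of sets of finite volume (or bounded, compactly supported densities $f$), for which $f*\tilde f$ is continuous with compact support, and choosing for instance Gaussian mollifiers the Fourier side converges monotonically while the space side converges dominatedly thanks to $\bar h\in L^1_{\rm loc}$ and $\limsup_{|x|\to\infty}\bar h<+\infty$.

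The genuine gap is in the \emph{first} assertion. Your reduction rests on the representation $\bar h=\Phi_N*(\zeta+\zeta_0)$ with $\zeta=\Delta\bar h\geq 0$ on $\R^N\setminus\{0\}$ and $\zeta_0\geq 0$ concentrated at the origin, and this is internally inconsistent and false precisely for the kernels the statement is meant to cover. First, a sign problem: the potential $\Phi_N*\rho$ of a \emph{positive} measure $\rho$ is superharmonic, so your representation forces $\Delta\bar h\leq 0$ off the origin, which together with the hypothesis $\Delta\bar h\geq 0$ there would make $\bar h$ harmonic in $\R^N\setminus\{0\}$; it therefore cannot hold for, say, $\bar h(x)=|x|^{-\alpha}$ with $N-2<\alpha<N$, the model case. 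Second, even as a decomposition of the Laplacian the claim fails: for that model case $\Delta\bar h=\alpha(\alpha+2-N)|x|^{-\alpha-2}$ away from $0$, which is not integrable near the origin, so $\Delta\bar h$ is not a locally finite measure on $\R^N$ and no point mass at $0$ can absorb the discrepancy --- the correct object is a finite-part renormalisation, and its Fourier transform is a \emph{negative} multiple of $|\xi|^{\alpha+2-N}$. In other words, the positivity of $\hat{\bar h}$ does not come from ``positive measure away from $0$ times $c|\xi|^{-2}$'' (besides, multiplying $|\xi|^{-2}$ by the Fourier transform of a merely locally finite measure is not a defined operation); it comes exactly from the singular behaviour at the origin, which your argument hides inside the unspecified $\zeta_0$. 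So the heart of part (1) --- that subharmonicity off the origin plus decay to the infimum forces $\hat{\bar h}$ to be a positive measure, or directly forces~(\ref{tocheck}) --- is not established. The fallback you sketch does not close the gap either: the statement is not restricted to radial $\bar h$, radial rearrangement neither preserves subharmonicity nor transfers positive definiteness back to the original kernel, and for radial kernels proving positivity of the Fourier transform ``via Bessel formulas'' is the whole difficulty, not a reduction of it.
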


In particular, the function $t^{-\alpha}$ is strictly positive definite for every $0<\alpha<N$, as well as the Gaussian function $e^{-|x|^2/2}$, and the function $-\ln(|x|)$ in dimension $N=2$.

An important corollary of Lemma~\ref{equivstr}, which immediately follows by Lemma~\ref{lemapprox}, is that a radial function $\bar h$ with the property that $h$, defined by $\bar h(x)=h(|x|)$, is continuous, decreasing in a right neighborhood of $0$, and so that $\lim_{t\searrow 0} h(t) t^\alpha=0$ for some $0<\alpha<N$, is strongly positive definite as soon as it is positive definite. If $\bar h$ is subharmonic in $\R^N\setminus\{0\}$, there is not even need of asking the existence of some $\alpha$ as above. More precisely, the following result holds.

\begin{lemma}\label{subh->spd}
Every radial function $\bar h:\R^N\to\R^+$ which is radially decreasing and subharmonic in $\R^N\setminus \{0\}$ is strongly positive definite.
\end{lemma}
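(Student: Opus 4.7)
The approach is to combine the two results already at our disposal: Theorem~\ref{knowndefpos}, which yields (classical) positive definiteness from subharmonicity plus a decay-at-infinity assumption, and Lemma~\ref{equivstr}, which upgrades positive definiteness to strong positive definiteness as soon as every probability measure can be approximated by smooth compactly supported ones with convergence of the self-energy. Since $\bar h$ is radially decreasing we automatically have $\lim_{|x|\to\infty}\bar h(x)=\inf\bar h$, so Theorem~\ref{knowndefpos} immediately gives that $\bar h$ is positive definite in the classical sense.

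The core step is then to verify the approximation hypothesis of Lemma~\ref{equivstr}. For compactly supported $\mu$ this is the content of Lemma~\ref{lemapprox}, provided we check that the radial profile $h$ (with $\bar h(x)=h(|x|)$) is continuous on $(0,+\infty)$, decreasing in a right neighborhood of $0$, and satisfies $\lim_{t\searrow 0} h(t)\,t^\alpha=0$ for some $0<\alpha<N$. Continuity and monotonicity near $0$ come directly from our hypotheses together with the ODE form of subharmonicity. The decay estimate is the main obstacle, and I would extract it as follows: the radial subharmonicity is the distributional inequality $(t^{N-1}h'(t))'\geq 0$ on $(0,+\infty)$, so $t^{N-1}h'(t)$ is monotone increasing; since $h'\leq 0$ by radial decrease, $t^{N-1}h'(t)$ is negative, hence bounded below by some $-A$ on $(0,t_0]$. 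Integrating the resulting bound $|h'(t)|\leq A\,t^{-(N-1)}$ gives $h(t)=O(t^{-(N-2)})$ for $N\geq 3$, $h(t)=O(\log(1/t))$ for $N=2$, and $h$ bounded for $N=1$; in every case there is an $\alpha\in(0,N)$ (for instance any $\alpha\in(N-2,N)$ when $N\geq 3$) for which $h(t)\,t^\alpha\to 0$, so Lemma~\ref{lemapprox} applies.

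For a general, not necessarily compactly supported $\mu\in\P(\R^N)$ I would first truncate and renormalise, setting $\mu_n=(\mu\res B_n)/\mu(B_n)$ for $n$ large; since $\bar h\geq 0$, monotone convergence yields $\E_{\bar h}(\mu_n,\mu_n)\to\E_{\bar h}(\mu,\mu)$, and clearly $\mu_n\rightharpoonup^\ast\mu$. A diagonal argument combining these truncations with the smooth approximations supplied by Lemma~\ref{lemapprox} then produces the sequence required by Lemma~\ref{equivstr}. Applying that lemma closes the proof. The hardest part is really the decay estimate on $h$ near $0$; everything else is essentially routine assembly of the earlier results.
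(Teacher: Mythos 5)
Your overall strategy is the same as the paper's first step: invoke Theorem~\ref{knowndefpos} for positive definiteness and then upgrade via Lemma~\ref{equivstr}, whose approximation hypothesis you try to verify through Lemma~\ref{lemapprox}. The gap is in the decay estimate near the origin. You argue that since $(t^{N-1}h'(t))'\geq 0$, the quantity $t^{N-1}h'(t)$ is monotone increasing, and since it is also nonpositive it must be ``bounded below by some $-A$ on $(0,t_0]$.'' This inference is wrong: a negative increasing function is bounded \emph{above} on $(0,t_0]$ by its value at $t_0$, but it can perfectly well tend to $-\infty$ as $t\searrow 0$. Concretely, $h(t)=t^{-N+1/2}$ is radially decreasing, subharmonic on $\R^N\setminus\{0\}$, and $L^1_{\rm loc}$, yet $t^{N-1}h'(t)=-(N-\tfrac12)\,t^{-3/2}\to-\infty$, so $|h'(t)|\not\le A\,t^{-(N-1)}$ and consequently $h(t)\ne O(t^{-(N-2)})$. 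The situation can be even worse: a profile behaving like $h(t)=t^{-N}\big(\log(1/t)\big)^{-2}$ near $0$ satisfies every hypothesis of the lemma, yet $h(t)\,t^\alpha\to+\infty$ for \emph{every} $\alpha<N$, so the hypothesis of Lemma~\ref{lemapprox} genuinely fails and your pipeline cannot be run.

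This is precisely the obstruction the paper deals with. When the condition $\lim_{t\searrow 0}h(t)\,t^\alpha=0$ fails for all $\alpha<N$, the paper replaces $\bar h$ with a regularised kernel $\bar h_\eps$ that agrees with $\bar h$ outside $B_\eps$ and is \emph{harmonic} in $B_\eps\setminus\{0\}$; the explicit harmonic profile grows only like $t^{2-N}$ (or $\log(1/t)$ for $N=2$), so $\lim_{t\searrow 0}h_\eps(t)\,t^{N-1/2}=0$ and Lemmas~\ref{lemapprox} and~\ref{equivstr} apply to $\bar h_\eps$. Since subharmonicity gives $\bar h_\eps\le\bar h$ pointwise and $\bar h_\eps\to\bar h$ as $\eps\to 0$, Dominated Convergence transfers the inequality~(\ref{tocheck}) from $\bar h_\eps$ to $\bar h$. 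So the missing ingredient in your argument is exactly this truncate-to-harmonic-and-pass-to-the-limit step; without it, the proof only covers the case where the decay hypothesis of Lemma~\ref{lemapprox} happens to hold. (Your remark about truncating non-compactly-supported $\mu$ is sensible but orthogonal to the real difficulty.)
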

\begin{proof}
Let $\bar h:\R^N\to\R^+$ be a radial, radially decreasing and subharmonic function, and let us call as usual $h:\R^+\to\R^+$ the function given by $\bar h(x)=h(|x|)$. By Theorem~\ref{knowndefpos} we know that $\bar h$ is positive definite. Therefore, as noticed above, Lemma~\ref{equivstr} and Lemma~\ref{lemapprox} readily give that $\bar h$ is strongly positive definite if there is some $0<\alpha<N$ such that $\lim_{t\searrow 0} h(t) t^\alpha=0$.\par

Let us then suppose then that it is not so. For any small $\eps>0$, we call $\bar h_\eps:\R^N\setminus\{0\}\to\R^+$ the radial function which coincides with $\bar h$ in $\R^N\setminus B_\eps$ and which is harmonic in $B_\eps\setminus\{0\}$. In particular, the corresponding function $h_\eps$ is defined in $(0,\eps)$ by the formula
\[
h_\eps(t) =\left\{\begin{array}{cc}
\bal \frac{\big(t^{2-N}-\eps^{2-N}\big)h'(\eps)}{(2-N)\eps^{1-N}}+h(\eps) \eal& \hbox{if $N\neq 2$}\,, \\[10pt]
\eps\big(\ln(t)-\ln(\eps)\big)h'(\eps)+h(\eps) & \hbox{if $N=2$}\,.
\end{array}\right.
\]
Notice that $\bar h_\eps$ satisfies by construction all the requirements of the lemma, and moreover $\lim_{t\searrow 0} h_\eps(t) t^{N-1/2}=0$. As a consequence, $\bar h_\eps$ is strongly positive definite. Given then any two probability measures $\mu$ and $\nu$, we know that
\[
\E_\eps(\mu,\mu)+\E_\eps(\nu,\nu) \geq 2 \E_\eps(\mu,\nu)  \,,
\]
where we write $\E_\eps$ in place of $\E_{\bar h_\eps}$ for simplicity of notations. Since the functions $\bar h_\eps$ pointwise converge to $\bar h$ when $\eps\to 0$, and $\bar h_\eps\leq \bar h$ by construction and since $\bar h$ is subharmonic, by the Dominated Convergence Theorem each term in the above inequality converges for $\eps\to 0$ to the corresponding term with $\E_{\bar h}$ in place of $\E_\eps$. The validity of~(\ref{tocheck}) is then established, and so $\bar h$ is strongly positive definite.
\end{proof}

The importance in this context of the notion of positive definiteness is mainly given by the following elementary observation.

\begin{lemma}\label{posconv}
Let $\bar g:\R^N\to\R^+$ be a l.s.c., $L^1_{\rm loc}$ and strongly positive definite function. Then the energy $\E$ is convex in the space
\[
\dot{\P}_{\bar g,c} = \bigg\{ \mu \in \P(\R^N):\, \spt(\mu)\comp\R^N,\, \E(\mu)<+\infty,\, \int x \, d\mu=0 \bigg\}
\]
of the probability measure with compact support, finite energy, and baricenter in the origin. If $\bar g$ is strictly strongly positive definite then $\E$ is strictly convex in $\dot{\P}_{\bar g,c}$.
\end{lemma}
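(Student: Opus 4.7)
The plan is to directly expand $\E$ on a convex combination using the bilinearity of the energy functional (the double integral is bilinear in the pair of measures), and then recognize that the desired convexity inequality is algebraically equivalent to condition \eqref{tocheck}.

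Concretely, given $\mu,\nu\in\dot{\P}_{\bar g,c}$ and $t\in[0,1]$, I would set $\sigma_t=(1-t)\mu+t\nu$ and compute
\[
\E(\sigma_t)= (1-t)^2 \E(\mu) + 2t(1-t)\, \E_{\bar g}(\mu,\nu) + t^2 \E(\nu).
\]
Before invoking this, I should check that $\sigma_t$ actually belongs to $\dot{\P}_{\bar g,c}$: positivity, unit mass, compactness of the support and vanishing baricenter are all preserved under convex combinations by linearity, while finiteness of $\E(\sigma_t)$ follows once we know $\E_{\bar g}(\mu,\nu)<+\infty$. This last bound is itself a consequence of the strong positive definiteness: applying \eqref{tocheck} directly yields $2\E_{\bar g}(\mu,\nu)\leq \E(\mu)+\E(\nu)<+\infty$.

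Having the expansion and the cross-term bound, the convexity inequality
\[
\E(\sigma_t)\leq (1-t)\E(\mu)+t\E(\nu)
\]
is equivalent, after cancelling $(1-t)^2\E(\mu)+t^2\E(\nu)$ against $(1-t)\E(\mu)+t\E(\nu)$, to $2t(1-t)\E_{\bar g}(\mu,\nu)\leq t(1-t)\big(\E(\mu)+\E(\nu)\big)$, which for $t\in(0,1)$ is exactly \eqref{tocheck}. For $t\in\{0,1\}$ there is nothing to prove. This establishes convexity. For the strict case, if $\bar g$ is strictly strongly positive definite and $\mu\neq\nu$, then \eqref{tocheck} is strict, hence the chain of equivalences gives strict inequality in the convexity relation for every $t\in(0,1)$, and strict convexity follows.

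There is essentially no obstacle: the entire content of the statement is the algebraic reformulation of \eqref{tocheck} as a convexity condition on the quadratic functional $\E$, so the only thing to be careful about is verifying that the set $\dot{\P}_{\bar g,c}$ is stable under convex combinations, which, as noted, reduces to the finiteness of the cross energy $\E_{\bar g}(\mu,\nu)$ and is granted by \eqref{tocheck} itself.
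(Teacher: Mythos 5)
Your argument is correct and is essentially the same as the paper's: expand the quadratic energy on a convex combination and observe that the resulting inequality is exactly the strong positive definiteness condition~(\ref{tocheck}). The one small addition you make — verifying $\E_{\bar g}(\mu,\nu)<+\infty$ so that the convex combination remains in $\dot{\P}_{\bar g,c}$ — is a reasonable completeness check that the paper leaves implicit, but it does not constitute a different approach.
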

\begin{proof}
Let $\mu,\,\nu$ be two measures in $\dot{\P}_{\bar g,c}$, and let $0\leq \lambda\leq 1$. Then,
\[\begin{split}
\E\big(\lambda \mu + (1-\lambda)\nu\big) &=
\E_{\bar g}\big(\lambda \mu + (1-\lambda)\nu,\lambda \mu + (1-\lambda)\nu\big)\\
&= \lambda^2 \E(\mu) + (1-\lambda)^2 \E(\nu) + 2 \lambda(1-\lambda) \E_{\bar g}(\mu,\nu)\,,
\end{split}\]
and then by~(\ref{tocheck})
\[
\E\big(\lambda \mu + (1-\lambda)\nu\big) - \big(\lambda\E(\mu)+(1-\lambda)\E(\nu)\big)
=\lambda(\lambda-1) \Big(\E_{\bar g}(\mu,\mu) +\E_{\bar g}(\nu,\nu) - 2 \E_{\bar g}(\mu,\nu)\Big)\leq 0\,,
\]
which gives the required convexity of $\E$. If $\bar g$ is strictly strongly positive definite, then the above inequality is strict whenever $0<\lambda<1$ and $\mu\neq\nu$, thus $\E$ is strictly convex in $\dot{\P}_{\bar g,c}$.
\end{proof}

We can now conclude the paper by giving the proof of Theorem~\mref{Unique}.

\proofof{Theorem~\mref{Unique}}
The function $x\mapsto |x|^\beta$ is positive definite for $2\leq \beta\leq 4$, as proved in~\cite[Theorem~2.1]{Lop}, and then it is also strongly positive definite since Lemma~\ref{equivstr} can be clearly applied. As a consequence, if $\bar h$ is strongly positive definite, so is also $\bar g(x)=\bar h(x)+|x|^\beta$. Since the assumptions allow us to apply Theorem~\mref{existence}, we know the existence of an optimal measure $\mu$ for the energy $\E$, which is compactly supported. Up to a translation, we can assume that $\mu$ has baricenter in the origin. For every $\theta\in\S^{N-1}$, we call then $\mu_\theta$ the measure obtained by rotating $\mu$ of an angle $\theta$. By radiality of $\bar g$, each measure $\mu_\theta$ has the same energy as $\mu$, so they are all optimal. Since the energy $\E$ is convex in the space $\dot{\P}_{\bar g,c}$, which contains by construction all the measures $\mu_\theta$, we deduce the optimality also of the measure
\[
\bar\mu = \intmed_{\S^{N-1}} \mu_\theta\, d\H^{N-1}(\theta)\,,
\]
which is of course in $\P_{\rm rad}$. The existence of an optimal measure in $\P_{\rm rad}$ is then established.\par

If $\bar h$ is strictly strongly positive definite, then so is $\bar g$ and then the energy $\E$ is strictly convex in $\dot{\P}_{\bar g,c}$. As a consequence, all the measures $\mu_\theta$ have to coincide, and this means that $\mu=\bar \mu$ is radial. We claim that $\mu$ is actually the only optimal measure with baricenter in the origin. This is in fact obvious, since if there is another such optimal measure $\nu\neq \mu$, the strict convexity of $\E$ gives that $\E\big((\mu+\nu)/2\big)< \E(\mu)$, which is absurd.\par

Let us now assume that $\bar h$ is subharmonic in $\R^N\setminus \{0\}$ and radially decreasing. The fact that $\bar h$ is strongly positive definite is given by Lemma~\ref{subh->spd}, and then the existence of a minimizer $\bar\mu$ of $\E$ in $\P$ which is radial follows by the first part of the proof. Suppose now that $\bar h$ is strictly subharmonic in some $B_r\setminus\{0\}$. Since of course $\bar\mu$ in particular minimizes the energy in $\P_{\rm rad}$, the fact that the support of $\bar\mu$ is a ball is given by either Proposition~\ref{convexN=1} or Proposition~\ref{convexN>1}. Let us call $R_1$ the radius of this ball. To concude the thesis, we have to show that also the support of any other minimal measure in $\P$ (which might also not belong to $\P_{\rm rad}$) is a ball of radius $R_1$.\par

To do so, let us consider two optimal measures $\mu',\, \mu''$ in $\dot{\P}_{\bar g,c}$, and let us call $S',\, S''$ their supports. Keep in mind that by Proposition~\ref{lowerlevel} we have $\psi_{\mu'}\equiv \E(\bar\mu)$ in $S'$ and $\psi_{\mu'}\geq \E(\bar\mu)$ in $\R^N\setminus S'$, and the same is true with $\psi_{\mu''}$ and $S''$ in place of $\psi_{\mu'}$ and $S'$. By the convexity of $\E$ in $\dot{\P}_{\bar g,c}$, we deduce that also $\mu=(\mu'+\mu'')/2$ is an optimal measure, so $\psi_\mu\leq \E(\bar\mu)$ everywhere on the support of $\mu$, which is $S'\cup S''$. Since $\psi_\mu = (\psi_{\mu'}+\psi_{\mu''})/2$, also keeping in mind Remark~\ref{forevery} we deduce that actually $\psi_{\mu'}(x)=\E(\bar\mu)$ for every point $x\in S''\setminus S'$. However, the function $\psi_{\mu'}$ is subharmonic in $\R^N\setminus S'$, and strictly subharmonic in the subset of $\R^N\setminus S'$ made by the points with distance less than $r$ from $S'$. Since $\psi_{\mu'}$ is constant in $S''\setminus S'$, this means that all the points of $S''\setminus S'$ have distance at least $r$ from $S'$.\par

Let then $\mu$ be a generic optimal measure in $\P$, and suppose up to a translation that $\mu\in \dot{\P}_{\bar g,c}$. Applying the above considerations with $\mu'=\mu$ and $\mu''$ equal to some rotation $\mu_\theta$ of $\mu$, we immediately deduce that the support of $\mu$ must be radial. We can now observe that the support of $\mu$ is actually a ball. Indeed, since it is radial then it coincides with the support of the radial average
\[
\intmed_{\S^{N-1}} \mu_\theta\, d\H^{N-1}(\theta)\,,
\]
and since this latter measure is a minimal measure which belongs to $\P_{\rm rad}$, by Proposition~\ref{convexN=1} or~\ref{convexN>1} its support must be a ball. That is, we have observed that the support of a generic optimal measure $\mu$ in $\P$ must be a ball. And finally, applying again the above considerations with $\mu'=\mu$ and $\mu''=\bar\mu$, we deduce that the radius of this ball must coincide with the radius $R_1$ of the support of $\bar\mu$. The proof is then concluded.
\end{proof}

\end{document}